\documentclass [12pt]{article}
\textwidth=17cm \oddsidemargin=-5mm \evensidemargin=-5mm
\textheight=225mm \topmargin=-10mm

\usepackage{amssymb,amsmath,amsthm,amscd}
\newtheorem{theorem}{Theorem}
\newtheorem{lemma}{Lemma}
\newtheorem{proposition}{Proposition}

\newtheoremstyle{neosn}{0.5\topsep}{0.5\topsep}{\rm}{}{\sc}{.}{ }{\thmname{#1}\thmnumber{ #2}\thmnote{ {\mdseries#3}}}
\theoremstyle{neosn}
\newtheorem{definition}{Definition}

\newcommand{\Rad}{\,\mathrm{Rad}\,}
\newcommand{\ad}{\,\mathrm{ad}\,}
\newcommand{\kerr}{\,\mathrm{ker}\,}
\newcommand{\GL}{\,\mathrm{GL}\,}
\newcommand{\SL}{\,\mathrm{SL}\,}
\newcommand{\PSL}{\,\mathrm{PSL}\,}

\newcommand{\Sp}{\,\mathrm{Sp}\,}
\newcommand{\Spin}{\,\mathrm{Spin}\,}

\newcommand{\SO}{\,\mathrm{SO}\,}
\newcommand{\Aut}{\,\mathrm{Aut}\,}

\newcommand{\Hom}{\,\mathrm{Hom}\,}
\newcommand{\diag}{\,\mathrm{diag}\,}
\begin{document}
\begin{center}

{\Large {\bf Automorphisms of Chevalley groups\\

\bigskip

of different types over commutative rings }}

\bigskip
\bigskip

{\large \bf E.~I.~Bunina}

\end{center}
\bigskip

\begin{center}

{\bf Abstract.}

\end{center}

In this paper we prove that every automorphism of (elementary)
adjoint
Chevalley group with root system of rank $>1$ over a commutative ring  (with $1/2$ for the systems $A_2$, $F_4$, $B_l$, $C_l$; with $1/2$ and $1/3$ for the system~$G_2$) is standard, i.\,e., it is a composition of ring, inner, central and graph automorphisms.

\bigskip

\section*{Introduction}\leavevmode

Study of automorphism of classical groups was started by the work of  Schreier and van der Varden~\cite{1928} in 1928.
They described all automorphisms of the group $\PSL_n$ $(n\geqslant 3)$ over an arbitrary field.

Diedonne \cite{D} (1951) and Rickart \cite{R1950} (1950) introduced the involution method, and with the help of this method described
automorphisms of the group $\GL_n$ ($n\geqslant 3)$ over a skew field.

The first step in construction the automorphism theory over rings, namely, for the group $\GL_n$ ($n\geqslant 3$) over the ring
of integer numbers, made Hua and Reiner ~\cite{HR}  (1951), after them some papers on commutative integral domains appeared.

The methods of the papers mentioned above were based mostly on studying involutions in the corresponding linear groups.

O'Meara~\cite{O'M2} in 1976 invited very different (geometrical) method, which did not use involutions, with the help  of
this method he described automorphism of the group $\GL_n$ ($n\geqslant 3$) over domains.

In 1982 Petechuk~\cite{v12} described automorphisms of the groups $\GL$, $\SL$ ($n\geqslant 4$)
over arbitrary commutative rings. If $n=3$, then automorphisms of given linear groups are not always standard.
They are standard either  if in a ring $2$ is invertible, or if a ring is a domain, or it is a semisimple ring.

Isomorphisms of the groups $\GL_n(R)$ and $\GL_m(S)$ over arbitrary associative rings with $1/2$
for $n,m\geqslant 3$ were described in 1981 by I.Z.\,Golubchik and A.V.\,Mikhalev~\cite{GolMikh1} and independently by
E.I.\,Zelmanov~\cite{v11}. In 1997 I.Z.\,Golubchik described isomorphisms between these groups for $n,m\geqslant 4$,
but over arbitrary associative rings with~$1$~\cite{Golub}.

In  50-th  years of the previous century Chevalley, Steinberg and others introduced the concept of Chevalley groups
over commutative rings, which includes classical linear groups (special linear $\SL$, special orthogonal $\SO$, symplectic $\Sp$,
spinor $\Spin$, and also projective groups connected with them) over commutative rings.

Clear that isomorphisms and automorphisms of Chevalley groups were also studied intensively.

The description of isomorphisms of Chevalley groups over fields was obtained by
R.\,Steinberg~\cite{Stb1} for the finite case and by J.\,Humphreys~\cite{H} for the infinite one. Many papers were devoted
to description of automorphisms of Chevalley groups over different
commutative rings, we can mention here the papers of
Borel--Tits~\cite{v22}, Carter--Chen~Yu~\cite{v24},
Chen~Yu~\cite{v25}--\cite{v29}, E.\,Abe~\cite{Abe_OSN}, A.\,Klyachko~\cite{Klyachko}.

But the question of description of automorphisms of Chevalley groups over arbitrary commutative rings have still been open.

In the paper  \cite{ravnyekorni} of the author it was shown that automorphisms of adjoint elementary Chevalley groups
with root systems $A_l,D_l, E_l$, $l\geqslant 2$, over local rings with invertible $2$
can be represented as the composition of ring automorphism and an \emph{automorphism--conjugation}, where as automorphism--conjugation
we call a conjugation of elements of a Chevalley group in the adjoint representation by some matrix from
the normalizer of this group in  $\GL(V)$. In the paper \cite{normalizers} according to the results of \cite{ravnyekorni}
it was proved that every automorphism of an arbitrary (elementary) Chevalley group of the described type is standard, i.\,e.,
it is represented as the composition of ring, inner, central and graph automorphism.
In the same paper it was obtained the theorem describing the normalizer of Chevalley groups in their adjoint representation,
which also holds for local rings without $1/2$.

In the papers  \cite{bunF4}, \cite{korni2}, \cite{BunBl} by the same methods we show that all automorphisms of Chevalley groups
with the root systems $F_4$, $G_2$, $B_l, l\geqslant 2$, over local rings with $1/2$ (in the case $G_2$ also with $1/3$)
are standard. In the paper \cite{without2} we described automorphisms of Chevalley groups of types $A_l,D_l, E_l$, $l\geqslant 3$,
over local rings without $1/2$.

In the present paper with the help of results of author's papers
 \cite{ravnyekorni}, \cite{normalizers}, \cite{bunF4}, \cite{korni2}, \cite{BunBl}, \cite{without2},
 and also the methods, described by V.M.\,Petechuk in~\cite{v12} for the special linear group $\SL$,
 we describe automorphisms of adjoint Chevalley groups over arbitrary commutative rings with the assumption that
 the corresponding root systems have rank $>1$, for the root systems $A_2$, $F_4$, $B_l$, $C_l$ the ring contains $1/2$,
 for the system $G_2$ the ring contains $1/2$ and $1/3$.

The author is thankful to N.A.\,Vavilov,  A.A.\,Klyachko,
A.V.\,Mikhalev for valuable advices, remarks and discussions.

\section{Definitions and main theorem.}\leavevmode

We fix a root system~$\Phi$ of rank $>1$. All details about root systems and their properties can be found in
\cite{Hamfris}, \cite{Burbaki}. Suppose now that we have some semisimple complex Lie algebra~$\mathcal L$ of type $\Phi$
with Cartan subalgebra~$\mathcal H$ (detailed information about semisimple Lie algebras can be found in the book~\cite{Hamfris}).

Then we can choose a basis $\{ h_1, \dots, h_l\}$ in~$\mathcal H$ and for every
$\alpha\in \Phi$ elements $x_\alpha \in {\mathcal L}_\alpha$ so that $\{ h_i; x_\alpha\}$ form a basis in~$\mathcal L$ and for every two elements of this basis their commutator is an integral linear combination of the elements of the same basis.

Let us introduce elementary Chevalley groups (see,
for example,~\cite{Steinberg}).

Let  $\mathcal L$ be a semisimple Lie algebra (over~$\mathbb C$)
with a root system~$\Phi$, $\pi: {\mathcal L}\to gl(V)$ be its
finitely dimensional faithful representation  (of dimension~$n$). If
$\mathcal H$ is a Cartan subalgebra of~$\mathcal L$, then a
functional
 $\lambda \in {\mathcal H}^*$ is called a
 \emph{weight} of  a given representation, if there exists a nonzero vector $v\in V$
 (that is called a  \emph{weight vector}) such that
for any $h\in {\mathcal H}$ $\pi(h) v=\lambda (h)v.$

In the space~$V$ there exists a basis of weight vectors such that
all operators $\pi(x_\alpha)^k/k!$ for $k\in \mathbb N$ are written
as integral (nilpotent) matrices. This basis is called a
\emph{Chevalley basis}. An integral matrix also can be considered as
a matrix over an arbitrary commutative ring with~$1$. Let $R$ be
such a ring. Consider matrices $n\times n$ over~$R$, matrices
$\pi(x_\alpha)^k/k!$ for
 $\alpha\in \Phi$, $k\in \mathbb N$ are included in $M_n(R)$.

Now consider automorphisms of the free module $R^n$ of the form
$$
\exp (tx_\alpha)=x_\alpha(t)=1+tx_\alpha+t^2 (x_\alpha)^2/2+\dots+
t^k (x_\alpha)^k/k!+\dots
$$
Since all matrices $x_\alpha$ are nilpotent, we have that this
series is finite. Automorphisms $x_\alpha(t)$ are called
\emph{elementary root elements}. The subgroup in $Aut(R^n)$,
generated by all $x_\alpha(t)$, $\alpha\in \Phi$, $t\in R$, is
called an \emph{elementary adjoint Chevalley group} (notation:
$E_{\ad}(\Phi,R)=E_{\ad}(R)$).

The action of $x_\alpha(t)$ on the Chevalley basis is described in
\cite{v23}, \cite{VavPlotk1}.

All weights of a given representation (by addition) generate a
lattice (free Abelian group, where every  $\mathbb Z$-basis  is also
a $\mathbb C$-basis in~${\mathcal H}^*$), that is called the
\emph{weight lattice} $\Lambda_\pi$.

Elementary Chevalley groups are defined not even by a representation
of the Chevalley groups, but just by its \emph{weight lattice}.
Namely, up to an abstract isomorphism an elementary Chevalley group
is completely defined by a root system~$\Phi$, a commutative
ring~$R$ with~$1$ and a weight lattice~$\Lambda_\pi$.

Among all lattices we can mark  the lattice corresponding to the
adjoint representation: it is generated by all roots (the \emph{root
lattice}~$\Lambda_{ad}$). The corresponding (elementary) Chevalley group is called \emph{adjoint}.

Introduce now Chevalley groups (see~\cite{Steinberg},
\cite{Chevalley}, \cite{v3}, \cite{v23}, \cite{v30}, \cite{v43},
\cite{VavPlotk1}, and also latter references in these papers).

Consider semisimple linear algebraic groups over algebraically
closed fields. These are precisely elementary Chevalley groups
$E_\pi(\Phi,K)$ (see.~\cite{Steinberg},~\S\,5).

All these groups are defined in $SL_n(K)$ as  common set of zeros of
polynomials of matrix entries $a_{ij}$ with integer coefficients
 (for example,
in the case of the root system $C_l$ and the universal
representation we have $n=2l$ and the polynomials from the condition
$(a_{ij})Q(a_{ji})-Q=0$). It is clear now that multiplication and
taking inverse element are also defined by polynomials with integer
coefficients. Therefore, these polynomials can be considered as
polynomials over arbitrary commutative ring with a unit. Let some
elementary Chevalley group $E$ over~$\mathbb C$ be defined in
$SL_n(\mathbb C)$ by polynomials $p_1(a_{ij}),\dots, p_m(a_{ij})$.
For a commutative ring~$R$ with a unit let us consider the group
$$
G(R)=\{ (a_{ij})\in \SL_n(R)\mid \widetilde p_1(a_{ij})=0,\dots
,\widetilde p_m(a_{ij})=0\},
$$
where  $\widetilde p_1(\dots),\dots \widetilde p_m(\dots)$ are
polynomials having the same coefficients as
$p_1(\dots),\dots,p_m(\dots)$, but considered over~$R$.

This group is called the \emph{Chevalley group} $G_\pi(\Phi,R)$ of
the type~$\Phi$ over the ring~$R$, and for every algebraically
closed field~$K$ it coincides with the elementary Chevalley group.

The subgroup of diagonal (in the standard basis of weight vectors)
matrices of the Chevalley group $G_\pi(\Phi,R)$ is called the
 \emph{standard maximal torus}
of $G_\pi(\Phi,R)$ and it is denoted by $T_\pi(\Phi,R)$. This group
is isomorphic to $\Hom(\Lambda_\pi, R^*)$.

Let us denote by $h(\chi)$ the elements of the torus $T_\pi
(\Phi,R)$, corresponding to the homomorphism $\chi\in Hom
(\Lambda(\pi),R^*)$.

In particular, $h_\alpha(u)=h(\chi_{\alpha,u})$ ($u\in R^*$, $\alpha
\in \Phi$), where
$$
\chi_{\alpha,u}: \lambda\mapsto u^{\langle
\lambda,\alpha\rangle}\quad (\lambda\in \Lambda_\pi).
$$

Note that the condition
$$
G_\pi (\Phi,R)=E_\pi (\Phi,R)
$$
is not true even for fields, that are not algebraically closed.

Let us show the difference between Chevalley groups and their elementary subgroups in the case when $R$ is semilocal. In this case
 $G_\pi (\Phi,R)=E_\pi(\Phi,R)T_\pi(\Phi,R)$
(see~\cite{Abe1}), and elements $h(\chi)$ are connected with elementary generators
by the formula
\begin{equation}\label{ee4}
h(\chi)x_\beta (\xi)h(\chi)^{-1}=x_\beta (\chi(\beta)\xi).
\end{equation}


Define four types of automorphisms of a Chevalley group
 $G_\pi(\Phi,R)$, we
call them  \emph{standard}.

{\bf Central automorphisms.} Let $C_G(R)$ be a center of
$G_\pi(\Phi,R)$, $\tau: G_\pi(\Phi,R) \to C_G(R)$ be some
homomorphism of groups. Then the mapping $x\mapsto \tau(x)x$ from
$G_\pi(\Phi,R)$ onto itself is an automorphism of $G_\pi(\Phi,R)$,
that is denoted by~$\tau$ and called a \emph{central automorphism}
of the group~$G_\pi(\Phi,R)$.

{\bf Ring automorphisms.} Let $\rho: R\to R$ be an automorphism of
the ring~$R$. The mapping $(a_{i,j})\mapsto (\rho (a_{i,j}))$ from $G_\pi(\Phi,R)$
onto itself is an automorphism of the group $G_\pi(\Phi,R)$, that is
denoted by the same letter~$\rho$ and is called a \emph{ring
automorphism} of the group~$G_\pi(\Phi,R)$. Note that for all
$\alpha\in \Phi$ and $t\in R$ an element $x_\alpha(t)$ is mapped to
$x_\alpha(\rho(t))$.

{\bf Inner automorphisms.} Let $S$ be some ring containing~$R$,  $g$
be an element of $G_\pi(\Phi,S)$, that normalizes the subgroup $G_\pi(\Phi,R)$. Then
the mapping $x\mapsto gxg^{-1}$  is an automorphism
of the group~$G_\pi(\Phi,R)$, that is denoted by $i_g$ and is called an
\emph{inner automorphism}, \emph{induced by the element}~$g\in G_\pi(\Phi,S)$. If $g\in G_\pi(\Phi,R)$, then call $i_g$ a \emph{strictly inner}
automorphism.

{\bf Graph automorphisms.} Let $\delta$ be an automorphism of the
root system~$\Phi$ such that $\delta \Delta=\Delta$. Then there
exists a unique automorphisms of $G_\pi (\Phi,R)$ (we denote it by
the same letter~$\delta$) such that for every $\alpha \in \Phi$ and
$t\in R$ an element $x_\alpha (t)$ is mapped to
$x_{\delta(\alpha)}(\varepsilon(\alpha)t)$, where
$\varepsilon(\alpha)=\pm 1$ for all $\alpha \in \Phi$ and
$\varepsilon(\alpha)=1$ for all $\alpha\in \Delta$.

Now suppose that $\delta_1,\dots, \delta_k$ are all different graph automorphisms for the given root system  (for the systems $E_7,E_8,B_l,C_l,F_4,G_2$ there can be just identical automorphism, for the systems $A_l, D_l,  l\ne 4, E_6$ there are two such
automorphisms, for the system $D_4$ there are six automorphisms). Suppose that we have a system of orthogonal idempotents of
the ring~$R$:
$$
\{\varepsilon_1, \dots, \varepsilon_k\mid \varepsilon_1+\dots+\varepsilon_k=1, \forall i\ne j\ \varepsilon_i\varepsilon_j=0\}.
$$
Then the mapping
$$
\Lambda_{\varepsilon_1,\dots,\varepsilon_k}:= \varepsilon_1 \delta_1+\dots+ \varepsilon_k \delta_k
$$
of the Chevalley group onto itself is an  automorphism, that is called a \emph{graph automorphism} of the Chevalley
group $G_\pi (\Phi,R)$.

 Similarly we can define four type of automorphisms of the elementary
subgroup~$E(R)$. An automorphism~$\sigma$ of the group
 $G_\pi(\Phi,R)$ (or $E_\pi(\Phi,R)$)
is called  \emph{standard} if it is a composition of automorphisms
of these introduced four types.

Our aim is to prove the next main theorem:

\begin{theorem}\label{main}
Let $G=G_{\pi}(\Phi,R)$ $(E_\pi(\Phi,R))$
be an \emph{(}elementary\emph{)} adjoint Chevalley group of rank $>1$  $R$ be a commutative ring with~$1$. Suppose that for $\Phi = A_2, B_l, C_l$ or $F_4$ we have $1/2\in R$, for $\Phi=G_2$ we have $1/2,1/3 \in R$. Then every automorphism of the group~$G$
is standard and the inner automorphism in the composition is strictly inner.
\end{theorem}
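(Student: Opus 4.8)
The plan is to bootstrap from the local case, which is already settled: by the author's papers \cite{ravnyekorni}, \cite{normalizers}, \cite{bunF4}, \cite{korni2}, \cite{BunBl}, \cite{without2}, every automorphism of the adjoint group over a \emph{local} ring (under the stated hypotheses on $1/2$ and $1/3$) is standard. Thus the entire content of Theorem~\ref{main} is a local-to-global passage, carried out in the spirit of Petechuk's treatment of $\SL_n$ in \cite{v12}. First I would reduce to the elementary group: since $\Phi$ has rank $>1$, the subgroup $E_\pi(\Phi,R)$ is normal in $G_\pi(\Phi,R)$ and is carried onto itself by any automorphism, so an automorphism $\sigma$ of $G$ restricts to one of $E_\pi(\Phi,R)$, and it suffices to pin down its action on the generators $x_\alpha(t)$.

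Next I would set up the coordinate maps on root subgroups. The root subgroups $X_\alpha=\{x_\alpha(t):t\in R\}$ admit a group-theoretic characterization through their commutation pattern and centralizers, so $\sigma$ permutes them up to a symmetry $\delta$ of $\Phi$; after composing with a suitable inner automorphism (using the torus action~\eqref{ee4} to absorb unit rescalings) one may arrange $\sigma(X_\alpha)=X_{\delta(\alpha)}$. This yields additive bijections $f_\alpha\colon R\to R$ with $\sigma(x_\alpha(t))=x_{\delta(\alpha)}(f_\alpha(t))$, and the Chevalley commutator formula forces all the $f_\alpha$ to coincide, up to sign, with a single additive bijection $f$ of $R$ normalized so that $f(1)=1$.

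Then I would localize. For every maximal ideal $\mathfrak m\subset R$ the ring $R_{\mathfrak m}$ is local, and the compatibility of $\sigma$ with the congruence structure of $E_\pi(\Phi,R)$ transports the picture to $E_\pi(\Phi,R_{\mathfrak m})$, where the local theorem applies. Reading off the local standard form shows the induced map on root parameters is a \emph{ring} automorphism of $R_{\mathfrak m}$; in particular $f$ is multiplicative after localizing at each $\mathfrak m$. Since an element of a commutative ring vanishes iff it vanishes in all localizations, this gives $f(ab)=f(a)f(b)$ globally, so $f=\rho$ is a ring automorphism of $R$. The local graph symmetries $\delta$ are locally constant on $\operatorname{Spec} R$ and therefore assemble, through a system of orthogonal idempotents $\varepsilon_1,\dots,\varepsilon_k$, into a single global graph automorphism $\Lambda_{\varepsilon_1,\dots,\varepsilon_k}$; the passage from $E_\pi(\Phi,R)$ back to all of $G_\pi(\Phi,R)$ is then routine, controlled by~\eqref{ee4}.

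The main obstacle will be the globalization of the inner factor, and specifically the assertion that it is \emph{strictly} inner. After stripping off $\rho$ and $\Lambda_{\varepsilon_1,\dots,\varepsilon_k}$, the residual automorphism fixes each root subgroup elementwise modulo the center, hence is conjugation by an element of the normalizer of $G_\pi(\Phi,R)$; but the local data only supply conjugators $g_{\mathfrak m}$ over the various $R_{\mathfrak m}$, and one must patch these into conjugation by a single $g\in G_\pi(\Phi,R)$ rather than merely by an element of some larger $G_\pi(\Phi,S)$. I expect controlling the diagonal (torus) part of $g$ simultaneously at all primes, and verifying that the patched element already lies in $G_\pi(\Phi,R)$, to be the technical heart of the proof. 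The leftover discrepancy is a homomorphism of $G$ into its center, i.e.\ a central automorphism $\tau$, which completes a decomposition of $\sigma$ as a composition of the four standard types with the inner factor strictly inner.
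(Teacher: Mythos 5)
Your overall philosophy (reduce to $E_\pi(\Phi,R)$, localize at maximal ideals, invoke the author's local-ring papers, reassemble graph symmetries with idempotents) is the same as the paper's, but two of your central steps are genuine gaps rather than routine verifications. First, the claim that the root subgroups admit a group-theoretic characterization so that, after composing with an inner automorphism, $\sigma(X_\alpha)=X_{\delta(\alpha)}$ for a \emph{single} symmetry $\delta$ of $\Phi$ is both unproved and false over a general ring: the standard automorphisms themselves include $\Lambda_{\varepsilon_1,\dots,\varepsilon_k}=\varepsilon_1\delta_1+\dots+\varepsilon_k\delta_k$, which sends $x_\alpha(t)$ to the product $x_{\delta_1(\alpha)}(\varepsilon_1 t)\cdots x_{\delta_k(\alpha)}(\varepsilon_k t)$, an element lying in no single root subgroup. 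Everything you build on this (the maps $f_\alpha$, their fusion into one additive bijection $f$ via the commutator formula) therefore has no foundation; in the paper the analogous identification $\rho(x_\alpha(t))=x_\alpha(s)$ is only obtained at the very end, by the long case-by-case matrix computations of Lemma~\ref{ringaut}, and only \emph{after} the inner-plus-graph part has been stripped off. Second, ``the local theorem applies'' after localizing is not available as a black box: an automorphism of $E_\pi(\Phi,R)$ induces no automorphism of $E_\pi(\Phi,R_{\mathfrak m})$ at all, since $\sigma$ is defined only on matrices with entries in $R$. The paper's substitute is Proposition~\ref{p4_1}: by the Abe--Suzuki normal structure theorem the subgroups $C_I$ are precisely the maximal normal subgroups of $E_\pi(\Phi,R)$, so $\varphi$ induces a possibly nontrivial permutation $I\mapsto J$ of maximal ideals together with (by Steinberg's isomorphism theorem over the residue fields) an isomorphism $R/I\cong R/J$; then, lifting the residue-field conjugator to $g\in G_{\ad}(\Phi,R_J)$, the elements $x_\alpha'=i_{g^{-1}}r_J\varphi(x_\alpha(1))$ satisfy the Chevalley relations and are congruent to $x_\alpha(1)$ modulo $\Rad R_J$, and what is actually imported from the local papers is the finer technical statement about such systems of elements, not the local automorphism theorem itself. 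Your sketch misses both the ideal permutation and the need for this finer statement.

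Third, you correctly identify the strictly-inner globalization as the crux, but you only say you ``expect'' it to work by controlling torus components of the $g_{\mathfrak m}$ at all primes simultaneously; that is precisely the step a proof must supply, and the paper's mechanism is quite different from your guess. Lemma~\ref{generate} shows that the $x_\alpha(1)$ generate, under addition and multiplication inside $M_N(S)$, the Chevalley basis $X_\alpha$ of the Lie algebra; hence the patched map $\psi$ (conjugation by $g=\prod g_J$ composed with the graph part) carries ${\mathcal L}(\Phi,R)$ into $M_N(R)$ and $E_{\ad}(\Phi,R)$ onto itself, so $\psi$ restricts to an automorphism of the Lie algebra ${\mathcal L}(\Phi,R)$. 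Then Klyachko's theorem (Lemma~\ref{Klyachko}) decomposes this Lie algebra automorphism as a graph automorphism composed with conjugation by an element of $G_{\ad}(\Phi,R)$ --- this, and not any prime-by-prime control of torus parts, is where strict innerness comes from. Without these ingredients (the normal-structure proposition, the congruence-mod-radical form of the local results, the generation lemma, and Klyachko's theorem) your outline does not close.
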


\section{Known notions, definitions and results, which will be used in the proof}\leavevmode

\subsection{Localization of rings and modules; injection of a ring into the product of its localizations.}\leavevmode

\begin{definition}  Let $A$ be a commutative ring. A subset $S\subset A$ is called \emph{multiplicatively closed} in~$A$, if $1\in S$ and $S$ is closed under multiplication.
\end{definition}

Introduce  an equivalence relation $\sim$ on the set of pairs $A\times S$ as follows:
$$
\frac{a}{s}\sim \frac{b}{t} \Longleftrightarrow \exists u\in S:\ (at-bs)u=0.
$$
  By $\frac{a}{s}$ we denote the whole equivalence class of the pair $(a,s)$, by $S^{-1}R$ we denote the set of all equivalence classes. On the set $S^{-1}R$ we can introduce the ring structure by
$$
\frac{a}{s}+\frac{b}{t}=\frac{at+bs}{st},\quad \frac{a}{s}\cdot \frac{b}{t}=\frac{ab}{st}.
$$

\begin{definition}
The ring $S^{-1}A$ is called the \emph{ring of fractions of~$A$ with respect to~$S$}.
\end{definition}

 Let $\mathfrak p$ be a prime ideal of~$A$. Then the set $S=A\setminus {\mathfrak p}$ is multiplicatively closed (it is equivalent to the definition of the prime ideal). We will denote the ring of fractions  $S^{-1}A$ in this case by $A_{\mathfrak p}$. The elements $\frac{a}{s}$, $a\in \mathfrak p$, form an ideal $\mathfrak M$ in~$A_{\mathfrak p}$. If $\frac{b}{t}\notin \mathfrak M$, then $b\in S$, therefore $\frac{b}{t}$ is invertible in~$A_{\mathfrak p}$. Consequently the ideal $\mathfrak M$ consists of all non-invertible elements of the ring~$A_{\mathfrak p}$, i.\,e., $\mathfrak M$ is the greatest ideal of this ring, so $A_{\mathfrak p}$ is a local ring.

The process of passing from~$A$ to~$A_{\mathfrak p}$ is called  \emph{localization at~${\mathfrak p}$.}

The construction $S^{-1}A$ can be easily carried trough  with an  $A$-module~$M$.
 Let $m/s$ denote the equivalence class of the pair $(m,s)$, the set $S^{-1}M$ of all such fractions    is made as a module $S^{-1}M$ with obvious operations of addition and scalar multiplication. As above we will write $M_{\mathfrak p}$ instead of $S^{-1}M$ for $S=A\setminus {\mathfrak p}$, where $\mathfrak p$ is a prime ideal of~$A$.

\begin{proposition}\label{inlocal}
Every commutative ring  $A$ with $1$ can be naturally embedded in the cartesian product of all its localizations  by maximal ideals
 $$
S=\prod\limits_{{\mathfrak m}\text{ is a maximal ideal of }A} A_{\mathfrak m}
$$
by diagonal mapping, which corresponds every $a\in A$ to the element
$$
\prod\limits_{\mathfrak m} \left( \frac{a}{1}\right)_{\mathfrak m}
$$
of~$S$.
\end{proposition}

\subsection{Isomorphisms of Chevalley groups over fields.}\leavevmode

We will need the description of isomorphisms between Chevalley groups over fields.

Suppose that root systems under consideration have ranks $>1$.

Introduce an additional concept of \emph{diagonal} automorphism:

\begin{definition}(see Lemma~58 from the book~\cite{Steinberg}).
 Let $G$ be a (elementary) Chevalley group over a field~$k$, and suppose that we have some set of elements $f_\alpha \in k^*$
 for all simple roots~$\alpha\in \Phi$. Let us extend  $f$ to an homomorphism of the whole lattice, generated by all roots, into~$k^*$. Then there exists a unique automorphism $\varphi$ of the group~$G$ such that
 $$
\varphi(x_\alpha(t))= x_\alpha (f_\alpha t)\text{ для всех }\alpha\in \Phi, t\in k.
$$
This automorphism is called a \emph{diagonal automorphism}.
\end{definition}

This is the description of isomorphisms of Chevalley groups over fields:

\begin{theorem}[see Theorems~30  and~31 from \cite{Steinberg}]\label{isom_fields}
 Let $G$, $G'$ be  \emph{(}elementary\emph{)} Chevalle groups, constructed  with root systems $\Phi, \Phi'$ and fields $k,k'$, respectively. Suppose that the root systems are not decomposable and have ranks $>1$. Suppose that for the root systems  $B_l,C_l,F_4$ corresponding fields have characteristics $\ne 2$ and for the root system  $G_2$ it is not equal to three. Let $\varphi: G\to G'$ be a group isomorphism. Then the root systems $\Phi$ and $\Phi'$ coincide, the fields $k$ and $k'$ are isomorphic, and the isomorphism $\varphi$ is a composition of a ring isomorphism between $G$ and $G'$, and also inner, diagonal and graph automorphisms of the group~$G'$. If the groups  $G$ and $G'$ are adjoint, then there is no diagonal automorphism in the composition.
\end{theorem}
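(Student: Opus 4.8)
The plan is to follow the classical Steinberg strategy: normalize $\varphi$ by composing it with standard automorphisms until what remains is a ring isomorphism, the decisive structural inputs being the Bruhat decomposition $G=\bigsqcup_{w\in W}B\dot w B$ with $B=TU$ and $U=\prod_{\alpha>0}X_\alpha$, together with the rigidity of the Chevalley commutator relations when $\operatorname{rank}\Phi\geqslant 2$. Throughout I write $X_\alpha=\{x_\alpha(t):t\in k\}$ for a root subgroup and $T$ for the standard torus, and I use indecomposability of $\Phi$ to ensure the associated geometry is irreducible so that $\Phi$ is recovered as a whole.

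First I would match the local building blocks. Borel subgroups are characterized group-theoretically via the $BN$-pair axioms and are all conjugate, so after composing $\varphi$ with a suitable inner automorphism of $G'$ I may assume $\varphi(B)=B'$ and, arranging an opposite Borel simultaneously, also $\varphi(U)=U'$, $\varphi(U^-)=U'^-$ and $\varphi(T)=T'$. The key is then to recover the individual root subgroups: the lower central series of the maximal unipotent group $U$ reproduces the filtration of $U$ by root height, and the $T$-action on the successive quotients separates them into the individual root subgroups $X_\alpha$. Since $\varphi$ now respects $U$ and $T$, it induces a graded, weight-preserving isomorphism matching the $X_\alpha$ with the $X'_{\alpha'}$, and compatibility with the action of $N/T\cong W$ forces an isomorphism of root systems $\Phi\cong\Phi'$; in particular $\Phi=\Phi'$. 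Composing with a graph automorphism of $G'$ I may then assume $\varphi(X_\alpha)=X'_\alpha$ for every $\alpha$.

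At this point $\varphi$ induces, for each simple root $\alpha$, an additive-group isomorphism $\theta_\alpha\colon k\to k'$ via $x_\alpha(t)\mapsto x'_\alpha(\theta_\alpha(t))$. Conjugating by torus elements and using the formula $h(\chi)x_\beta(\xi)h(\chi)^{-1}=x_\beta(\chi(\beta)\xi)$, together with the commutator relations inside a rank-$2$ subsystem containing $\alpha$, shows that after scaling each $\theta_\alpha$ by a constant $f_\alpha\in k'^{*}$ the maps become multiplicative as well as additive; this upgrades them to a single field isomorphism $\rho\colon k\to k'$, giving $k\cong k'$. Peeling off the ring isomorphism induced by $\rho$ leaves a map of the form $x_\alpha(t)\mapsto x_\alpha(f_\alpha t)$, and consistency of the commutator relations forces the $f_\alpha$ to extend to a homomorphism $Q\to k'^{*}$ of the root lattice, i.e. this residual map is exactly a diagonal automorphism. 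Thus $\varphi$ is a composition of ring, inner, graph and diagonal automorphisms. Finally, when the groups are adjoint one has $T=\Hom(Q,k^{*})$, and the formula above shows that conjugation by $h(\chi)$ realizes precisely the diagonal automorphism with $f_\beta=\chi(\beta)$; hence every diagonal automorphism is already inner and no separate diagonal factor is needed.

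The main obstacle is the recovery of the root subgroups from the abstract group in the second step, and this is exactly where the hypotheses enter. Identifying the lower central series of $U$ with the height filtration depends on the relevant structure constants $N_{\alpha\beta}$ being invertible in the field; for the multiply-laced systems $B_l,C_l,F_4$ these constants are divisible by $2$ and for $G_2$ by $3$, so in characteristic $2$ (respectively $3$) the commutator relations degenerate, the grading collapses, and the recovery fails. This is precisely why one assumes $\charr k\neq 2$ for $B_l,C_l,F_4$ and $\charr k\neq 3$ for $G_2$. The condition $\operatorname{rank}\Phi\geqslant 2$ is equally essential: for $A_1$ the group $U$ is abelian and carries no commutator structure to exploit, and the conclusion genuinely fails.
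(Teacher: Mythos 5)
The paper does not actually prove Theorem~\ref{isom_fields}: it is imported wholesale as Theorems~30 and~31 of Steinberg's Lectures \cite{Steinberg} (with \cite{Stb1}, \cite{H} covering the finite and infinite cases), so the relevant comparison is with Steinberg's classical argument, of which your sketch reproduces the second half. The genuine gap is in your first step. You assert that ``Borel subgroups are characterized group-theoretically via the $BN$-pair axioms'' and on that basis compose with an inner automorphism so that $\varphi(B)=B'$, $\varphi(U)=U'$, $\varphi(T)=T'$. But a $BN$-pair is auxiliary structure placed on a group, not an abstract group-theoretic invariant: an arbitrary abstract isomorphism $\varphi\colon G\to G'$ has no a priori reason to carry $B$, $U$, or even unipotent elements to anything compatible with the $BN$-pair of $G'$. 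Establishing exactly this fact --- that $\varphi(U)$ is conjugate to $U'$ --- is the hard core of Steinberg's proof: for finite fields it comes from Sylow theory ($U$ is a Sylow $p$-subgroup), and for infinite fields one needs an abstract, isomorphism-invariant characterization of unipotent subgroups, which is precisely the contribution of Humphreys \cite{H} and where the delicate field-theoretic issues live. Everything after this point in your sketch (lower central series of $U$, separation of root subgroups by the $T$-action, recovery of $\Phi$, extraction of the field isomorphism, and the observation that diagonal automorphisms of the adjoint group are inner via $T_{\ad}\cong\Hom(Q,k^*)$) is sound and standard, but it is all downstream of the step you assumed away.

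A secondary point: your explanation of the hypotheses is not the right one. The conditions $\charr k\ne 2$ for $B_l,C_l,F_4$ and $\charr k\ne 3$ for $G_2$ are not imposed because ``the grading collapses and the recovery fails''; the conclusion of the theorem is genuinely false without them. In characteristic $2$ the special isogenies exchanging long and short roots yield abstract isomorphisms between the $B_l$ and $C_l$ groups over perfect fields and produce non-standard automorphisms of the groups of types $B_2$ and $F_4$ (the source of the Suzuki and Ree groups), and the same happens for $G_2$ in characteristic $3$; the hypotheses exclude actual counterexamples, not a mere inconvenience of the method. Conversely, your claim that the conclusion ``genuinely fails'' for $A_1$ is too strong: over infinite fields every automorphism of $\PSL_2(k)$ is still semilinear, i.e.\ standard; what fails in rank one is only this commutator-based proof strategy.
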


\subsection{Normal structure of Chevalley groups over commutative rings.}\leavevmode

Note that for every ideal $I$ of~$R$ the natural mapping $R\to R/I$ induces a homomorphism
$$
\lambda_I: G_\pi(\Phi,R)\to G_\pi(\Phi,R/I).
$$
If $I$ is a proper ideal of~$R$, then the kernel of~$\lambda_I$  is a  non-central normal subgroup of~$G_\pi(\Phi, R)$.

We denote the inverse image of the center of $G_\pi(R/I)$ under~$\lambda_I$ by~$Z_\pi(\Phi,R,I)$.

\begin{theorem}\label{t3_1}\emph{(see \cite{v19})}
Let the rank of an indecomposable root system~$\Phi$ is more than one. If a subgroup $\mathcal H$ of~$E_\pi(\Phi,R)$
is normal in~$E_\pi(\Phi,R)$, then
$$
E_\pi(\Phi,R,I)\leqslant {\mathcal H}\leqslant Z_\pi(\Phi,R,I)\cap E_\pi (\Phi,R)
$$
for some uniquely defined ideal~$I$ of the ring~$R$.
\end{theorem}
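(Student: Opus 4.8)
This is the \emph{sandwich} (normal-structure) classification, and the plan is to prove the two inclusions separately after attaching to $\mathcal H$ its \emph{level}. First I would set $I=I(\mathcal H)$ to be the ideal of $R$ generated by all $t\in R$ for which $x_\alpha(t)\in\mathcal H$ for some $\alpha\in\Phi$. Since the Weyl group is realized inside $E_\pi(\Phi,R)$ by the elements $w_\alpha(1)=x_\alpha(1)x_{-\alpha}(-1)x_\alpha(1)$, conjugation by these permutes the root subgroups transitively on each root length, so the set $\{t:x_\alpha(t)\in\mathcal H\}$ depends on $\alpha$ only through its length; and the Chevalley commutator formula $[x_\alpha(t),x_\beta(u)]=\prod_{i,j>0}x_{i\alpha+j\beta}(c_{ij\alpha\beta}\,t^iu^j)$ shows this set is closed under addition and under multiplication by arbitrary elements of $R$ (taking $\beta$ with $\alpha+\beta\in\Phi$, which exists precisely because $\Phi$ has rank $>1$). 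Thus $I$ is a genuine ideal, intrinsically attached to $\mathcal H$; this last fact will yield uniqueness once the inclusions are established.

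For the lower inclusion $E_\pi(\Phi,R,I)\leqslant\mathcal H$, recall that $E_\pi(\Phi,R,I)$ is the normal closure in $E_\pi(\Phi,R)$ of the $x_\alpha(t)$ with $\alpha\in\Phi$, $t\in I$; since $\mathcal H$ is normal it suffices to show $x_\alpha(t)\in\mathcal H$ for all $\alpha$ and $t\in I$. Starting from a level generator $x_{\alpha_0}(t_0)\in\mathcal H$, I would commute it successively with elements $x_\beta(s)$, $\alpha_0+\beta\in\Phi$, producing $x_{\alpha_0+\beta}(\pm s t_0)$ modulo factors already known to lie in $\mathcal H$; iterating along chains of roots (again using rank $>1$, so that $\Phi$ is connected with enough neighbouring roots) yields $x_\gamma(r t_0)\in\mathcal H$ for every root $\gamma$ and every $r\in R$. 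Summing inside a single root subgroup then gives $x_\gamma(t)\in\mathcal H$ for all $t\in I$, as required.

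The upper inclusion $\mathcal H\leqslant Z_\pi(\Phi,R,I)\cap E_\pi(\Phi,R)$ is the main obstacle. Passing to the quotient by $I$ via $\lambda_I$, the claim is equivalent to saying that $\overline{\mathcal H}=\lambda_I(\mathcal H)$ is central in $G_\pi(\Phi,R/I)$, and by construction of $I$ the reduced subgroup $\overline{\mathcal H}$ has \emph{trivial} level. So everything reduces to the key lemma that a level-zero normal subgroup of $E_\pi(\Phi,\overline R)$, $\overline R=R/I$, is central. The natural organising device is Proposition~\ref{inlocal}: centrality in $G_\pi$ is an entrywise (polynomial) condition and can therefore be checked after the embedding $\overline R\hookrightarrow\prod_{\mathfrak m}\overline R_{\mathfrak m}$, so it suffices to treat local rings $\overline R_{\mathfrak m}$, where the decomposition $G_\pi(\Phi,\overline R_{\mathfrak m})=E_\pi(\Phi,\overline R_{\mathfrak m})T_\pi(\Phi,\overline R_{\mathfrak m})$ and the Bruhat normal form are available.

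Over a local ring the remaining work is the delicate commutator calculus: for $g\in\mathcal H$ I would write $g$ in Bruhat form and extract, from $[g,x_\alpha(t)]\in\mathcal H$ and the iterated commutators $[[g,x_\alpha(t)],x_\alpha(u)]$, a family of elementary elements whose parameters are forced into $I$ by the level-zero property; this confines the unipotent part of $g$ to the congruence subgroup and pushes $\bar g$ into the torus, after which $h(\chi)x_\beta(\xi)h(\chi)^{-1}=x_\beta(\chi(\beta)\xi)$ shows that a toral element centralising every root subgroup is central in $G_\pi(\Phi,\overline R_{\mathfrak m})$. Patching through Proposition~\ref{inlocal} then places $\bar g$ in the centre of $G_\pi(\Phi,\overline R)$, i.e.\ $g\in Z_\pi(\Phi,R,I)\cap E_\pi(\Phi,R)$. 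Uniqueness of $I$ is then immediate: the level of $E_\pi(\Phi,R,I)$ and of $Z_\pi(\Phi,R,I)\cap E_\pi(\Phi,R)$ are each exactly $I$, so any sandwiched $\mathcal H$ has level $I$, an invariant of $\mathcal H$. I expect this local extraction to be the technical heart, since it must control the unipotent, toral and monomial parts of $g$ simultaneously and uniformly across root lengths.
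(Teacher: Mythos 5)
First, note what you are competing against: the paper does not prove Theorem~\ref{t3_1} at all --- it is imported wholesale from Abe--Suzuki \cite{v19} (with Vaserstein \cite{v42} nearby in the bibliography), so your proposal must stand on its own as a reconstruction of that external proof. Your skeleton (attach a level ideal $I$ to $\mathcal H$; lower inclusion by commutator extraction; upper inclusion by reducing mod $I$ and localizing) is indeed the standard skeleton from that literature. But there are two genuine gaps. The first is the structure constants. Your claim that the set $\{t : x_\alpha(t)\in\mathcal H\}$ is closed under multiplication by $R$, and that the lower inclusion follows by iterated commutation, silently assumes the relevant Chevalley coefficients are $\pm 1$. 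That is true only in the simply-laced case: for $B_l$, $C_l$, $F_4$ the coefficient of $x_{\alpha+\beta}(tu)$ in $[x_\alpha(t),x_\beta(u)]$ can be $\pm 2$, and for $G_2$ also $\pm 3$, so commutation only places $2tu$ (resp.\ $3tu$) in the level. This is not a removable blemish: without invertibility of $2$ (resp.\ $3$) the theorem as stated is false, and Abe--Suzuki's actual classification for doubly-laced systems is in terms of \emph{admissible pairs} of ideals, not a single sandwiching ideal. Your argument never invokes the paper's standing hypotheses $1/2\in R$ (and $1/3$ for $G_2$), which are exactly what rescue this step; moreover, even with units, extracting the single factor $x_{\alpha+\beta}(c_{11}tu)$ from the full product $\prod x_{i\alpha+j\beta}(c_{ij}t^iu^j)$ requires an argument (the higher terms must be eliminated by further commutation), which you do not give.

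The second gap is that the upper inclusion --- which you correctly identify as the main obstacle --- is deferred rather than proved, and the reduction you propose does not work as described. The localization homomorphism $E_\pi(\Phi,\overline R)\to E_\pi(\Phi,\overline R_{\mathfrak m})$ is far from surjective (its image does not contain $x_\alpha(a/s)$ in general), so the image of $\mathcal H$ need not be normal in $E_\pi(\Phi,\overline R_{\mathfrak m})$; your Bruhat-form argument over the local ring presupposes that normality. It is also not immediate that $\lambda_I(\mathcal H)$ has trivial level: an elementary root element of $\lambda_I(\mathcal H)$ need not lift to an elementary element of $\mathcal H$, so this needs the same extraction machinery. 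In Abe--Suzuki and Vaserstein these difficulties are exactly what the bulk of the proof addresses (conjugation/commutator calculus with denominators and patching), and the phrase ``delicate commutator calculus'' in your text is standing in for the entire technical core of the theorem. So: right plan, correct identification of where the difficulty lies, but the proposal is an outline whose two critical steps (unit hypotheses for non-simply-laced $\Phi$, and the level-zero-implies-central lemma together with a legitimate localization argument) are missing or would fail as written.
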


\subsection{Projective modules over local rings.}\leavevmode

The well-known result is the following

\begin{theorem}\label{projfree}
A finitely generated projective module over a local ring is free.
 \end{theorem}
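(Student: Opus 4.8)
The plan is to combine Nakayama's lemma with the splitting property of projective modules. Write $R$ for the local ring, $\mathfrak m$ for its unique maximal ideal, and $k=R/\mathfrak m$ for the residue field, and let $M$ be a finitely generated projective $R$-module. First I would pass to the residue field: since $M$ is finitely generated, the quotient $\bar M := M/\mathfrak m M$ is a finitely generated, hence finite-dimensional, vector space over $k$; set $n=\dim_k \bar M$. Choosing $x_1,\dots,x_n\in M$ whose images form a $k$-basis of $\bar M$, Nakayama's lemma guarantees that $x_1,\dots,x_n$ already generate $M$ over $R$, so that the assignment $e_i\mapsto x_i$ defines a surjection $\varphi\colon R^n\to M$.

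Next I would exploit projectivity. Because $M$ is projective, the surjection $\varphi$ splits, yielding a direct sum decomposition $R^n\cong M\oplus K$ with $K=\ker\varphi$. The kernel $K$ is itself finitely generated, being the image of the projection $R^n\to K$ coming from the splitting; this is exactly the hypothesis needed to apply Nakayama's lemma to $K$ in the final step.

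The concluding step is a dimension count over $k$. Tensoring the decomposition $R^n\cong M\oplus K$ with $k$ over $R$, and using that $-\otimes_R k$ commutes with finite direct sums and sends $R^n$ to $k^n$ and $M$ to $\bar M$, gives an isomorphism of $k$-vector spaces $k^n\cong \bar M\oplus (K/\mathfrak m K)$. Since $\dim_k k^n=n$ and $\dim_k\bar M=n$ by construction, additivity of dimension forces $K/\mathfrak m K=0$, that is, $K=\mathfrak m K$. As $K$ is finitely generated, Nakayama's lemma now yields $K=0$, so $\varphi$ is an isomorphism and $M\cong R^n$ is free.

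I expect no serious obstacle here; the argument is essentially two applications of Nakayama's lemma bridged by the splitting of a projective surjection. The only points requiring care are verifying that the kernel $K$ is finitely generated, so that Nakayama applies to it, and checking that reduction modulo $\mathfrak m$ respects the direct sum decomposition, both of which are routine once the isomorphism $R^n\cong M\oplus K$ is in hand.
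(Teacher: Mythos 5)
Your proof is correct and complete. Note that there is nothing in the paper to compare it against: the paper states this theorem without proof, as a well-known fact from commutative algebra. Your argument is the standard one --- lift a $k$-basis of $M/\mathfrak{m}M$ via Nakayama's lemma to a split surjection $R^n\to M$, observe that the kernel $K$ is finitely generated because it is a direct summand (hence a quotient) of $R^n$, and then kill $K$ by a second application of Nakayama after reducing the decomposition $R^n\cong M\oplus K$ modulo $\mathfrak{m}$ --- and the two points you flag as needing care (finite generation of $K$, and that $-\otimes_R k$ respects the direct sum) are exactly the right ones and are handled correctly.
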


\subsection{The subgroup  $E_\pi(\Phi,R)$ is characteristic in the group $G_\pi(\Phi,R)$.}\leavevmode

A subgroup  $H$ of $G$ is called \emph{characteristic}, if it is mapped into itself under any automorphism of~$G$.
In particular, any characteristic subgroup is normal.

\begin{theorem}\emph{(see \cite{v42})}.\label{character}
If the rank of~$\Phi$ is greater than one, the elementary subgroup $E_\pi(\Phi,R)$ is characteristic in the Chevalley group $G_\pi(\Phi,R)$.
\end{theorem}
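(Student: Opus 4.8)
The plan is to identify the elementary subgroup with the commutator subgroup of the whole group, i.e. to prove $E_\pi(\Phi,R)=[G,G]$ for $G=G_\pi(\Phi,R)$. Once this equality is established the theorem follows at once, because the commutator subgroup is visibly characteristic: for every $\sigma\in\Aut G$ one has $\sigma([G,G])=[\sigma(G),\sigma(G)]=[G,G]$, so $\sigma(E_\pi(\Phi,R))=E_\pi(\Phi,R)$. Thus the entire problem reduces to the two inclusions $E_\pi(\Phi,R)\subseteq[G,G]$ and $[G,G]\subseteq E_\pi(\Phi,R)$, where the second is equivalent to the assertion that the quotient $G/E_\pi(\Phi,R)$ is abelian (which in particular presupposes that $E_\pi(\Phi,R)$ is normal in $G$).

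For the inclusion $E_\pi(\Phi,R)\subseteq[G,G]$ I would prove that $E_\pi(\Phi,R)$ is perfect, which for rank $>1$ is a consequence of the Chevalley commutator formula. For each root $\alpha$ one chooses roots $\beta,\gamma$ with $\beta+\gamma=\alpha$ such that no other positive integral combination $i\beta+j\gamma$ is a root; then $[x_\beta(u),x_\gamma(v)]=x_\alpha(\pm uv)$, so that every generator $x_\alpha(t)$ lies in $[E_\pi(\Phi,R),E_\pi(\Phi,R)]$ (in the multiply-laced systems the extra terms produced by the commutator formula are cleared by combining several such commutators, which is harmless since the ring contains the relevant inverses in those cases). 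Because rank $>1$ guarantees the existence of such decompositions for every root, one obtains $E_\pi(\Phi,R)=[E_\pi(\Phi,R),E_\pi(\Phi,R)]\subseteq[G,G]$.

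The substantive direction is $[G,G]\subseteq E_\pi(\Phi,R)$, i.e. the abelianness of $G/E_\pi(\Phi,R)$, and I would first settle it locally. Over a local ring one has the decomposition $G_\pi(\Phi,R)=E_\pi(\Phi,R)\,T_\pi(\Phi,R)$, the torus $T_\pi(\Phi,R)\cong\Hom(\Lambda_\pi,R^*)$ is abelian, and by formula~\eqref{ee4} it normalizes $E_\pi(\Phi,R)$; hence $E_\pi(\Phi,R)$ is normal in $G$ and $G/E_\pi(\Phi,R)\cong T_\pi(\Phi,R)/(T_\pi(\Phi,R)\cap E_\pi(\Phi,R))$ is abelian, so $[G,G]\subseteq E_\pi(\Phi,R)$ over a local ring. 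To pass to an arbitrary commutative $R$ I would use the diagonal embedding $R\hookrightarrow\prod_{\mathfrak m}R_{\mathfrak m}$ of Proposition~\ref{inlocal}: given $g,h\in G_\pi(\Phi,R)$, the image of the commutator $[g,h]$ in each $G_\pi(\Phi,R_{\mathfrak m})$ lies in $E_\pi(\Phi,R_{\mathfrak m})$ by the local case just treated.

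The main obstacle is exactly the final descent step: one must deduce from the fact that all localizations of an element lie in the elementary subgroups that the element itself lies in $E_\pi(\Phi,R)$. This is a local-global principle for membership in the elementary subgroup of a Chevalley group of rank $>1$ (of the Suslin--Abe--Taddei--Vaserstein type), and it is the one genuinely deep input, being precisely where the hypothesis that $\Phi$ is indecomposable of rank $>1$ is indispensable. Granting this principle, the localized membership gives $[g,h]\in E_\pi(\Phi,R)$ for all $g,h$, hence $[G,G]\subseteq E_\pi(\Phi,R)$; combined with the perfectness of the previous paragraph this yields $E_\pi(\Phi,R)=[G,G]$ and therefore the characteristic property claimed.
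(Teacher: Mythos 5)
You should first know what you are being compared against: the paper does not prove this theorem at all — it imports it from Vaserstein \cite{v42}. So the question is whether your argument is a valid substitute, and it is not: the strategy of identifying $E_\pi(\Phi,R)$ with $[G,G]$ cannot work over arbitrary commutative rings. The decisive error is the descent step that you yourself isolate as ``the one genuinely deep input'': a local--global principle for plain membership in the elementary subgroup does not exist — it is not deep, it is false. Over every localization one has $\SL_n(R_{\mathfrak m})=E_n(R_{\mathfrak m})$ (Gaussian elimination over a local ring), so for $\Phi=A_{n-1}$ and the universal lattice your principle would give $\SL_n(R)=E_n(R)$, i.e.\ $SK_1(R)=1$, for \emph{every} commutative ring $R$; this is classically false (there exist even principal ideal domains with nontrivial $SK_1$, by Ischebeck and by Grayson, besides older group-ring examples). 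Suslin's actual local--global principle concerns matrices $g(X)\in \GL_n(R[X])$ with $g(0)=1$ whose localizations lie in $E_n(R_{\mathfrak m}[X])$; raw membership in $E$ is emphatically not a local condition, which is exactly why $K_1$-type obstructions exist at all. Worse, the inclusion you are trying to descend to, $[G,G]\subseteq E_\pi(\Phi,R)$, is itself not a theorem in this generality: the quotient $G/E$ is non-abelian in general (the first examples are due to van der Kallen), and the strongest general results --- Bak \cite{v20} for $\GL_n$ and Hazrat--Vavilov \cite{v35} for Chevalley groups, both already in this paper's bibliography --- establish only \emph{nilpotency} of $G/E$, and only for rings of finite Bass--Serre dimension. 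So no proof of $E=[G,G]$ can exist for arbitrary $R$. (A lesser issue: the perfectness you invoke for $E\subseteq[G,G]$ also fails in the stated generality; for $\Phi=B_2$ or $G_2$ and any $R$ with a residue field $\mathbb F_2$, e.g.\ $R=\mathbb Z$, the group $E_\pi(B_2,R)$ surjects onto $\Sp_4(\mathbb F_2)\cong S_6$, which is not perfect — the theorem as stated carries no invertibility hypotheses.)

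The correct route, which is Vaserstein's in \cite{v42}, keeps your perfectness ingredient (under hypotheses excluding the $\mathbb F_2$ pathology) but makes no assertion whatsoever about $G/E$. One uses instead that $E_\pi(\Phi,R)$ is normal in $G_\pi(\Phi,R)$ (Taddei \cite{v41}, Vaserstein \cite{v42}) together with the sandwich classification of subgroups of $G$ normalized by the elementary subgroup, i.e.\ the analogue for subgroups of $G$ of Theorem~\ref{t3_1}: for $\sigma\in\Aut G$ the image $\sigma(E)$ is again a perfect normal subgroup of $G$, hence trapped in a sandwich $E_\pi(\Phi,R,I)\subseteq\sigma(E)\subseteq C(I)$ between a relative elementary subgroup and a congruence subgroup, and perfectness of $\sigma(E)$ is what rules out proper ideals $I$ (elements of $C(I)$ are central modulo $I$, so a perfect subgroup of $C(I)$ dies under reduction mod $I$); once the ideal is forced to be $R$, the inclusions $E\subseteq\sigma(E)$ and $E\subseteq\sigma^{-1}(E)$ give equality. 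The moral is that the characteristic property is extracted from the classification of $E$-normalized subgroups, not from any computation of the quotient $G/E$, and your second and third paragraphs would have to be replaced wholesale by that machinery.
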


\section{Formulation of main steps of the proof.}\leavevmode

 If $R$ is a ring, $I$  is its ideal, then by $\lambda_I: G_\pi(\Phi,R)\to G_\pi(\Phi,R/I)$ ($E_\pi(\Phi,R)\to E_\pi(\Phi,R/I)$) we denote the homomorphism which corresponds every element (matrix) $A\in G_\pi(\Phi,R)$ to its image under the natural homomorphism $R\to R/I$.

Recall that by $Z_I$ we denote the inverse image of the center of the group $G_\pi(\Phi,R/I)$ under the homomorphism $\lambda_I$.

 \begin{definition}
 Let $C_I$ denote the group $Z_I\cap E_\pi(\Phi,R)$,  $N_I=\kerr \lambda_I\cap E_\pi(\Phi,R)$.
 \end{definition}

\begin{proposition}\label{p4_1}
 Let $\varphi$ be an arbitrary automorphism of the group $E_\pi(\Phi,R)$, $I$ be a maximal ideal of~$R$. Then there exists a maximal ideal $J$ of~$R$ such that $\varphi (N_I)=N_J$.
\end{proposition}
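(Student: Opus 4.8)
The plan is to reduce everything to the normal-subgroup structure theorem (Theorem~\ref{t3_1}) together with the special feature of the \emph{adjoint} lattice: for an adjoint group the center of $G_\pi(\Phi,R/K)$ is trivial for every ideal $K$, since the weight lattice coincides with the root lattice and hence $Z(G_\pi(\Phi,R/K))\cong \Hom(\Lambda_\pi/\Lambda_{ad},(R/K)^*)=1$. Consequently $Z_\pi(\Phi,R,K)=\kerr\lambda_K$, so that $C_K=N_K$ for \emph{every} ideal $K$, and the two bounds in Theorem~\ref{t3_1} collapse to
$$
E_\pi(\Phi,R,K)\leqslant {\mathcal H}\leqslant N_K .
$$
This is the observation that makes the whole argument short.

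First I would note that $N_I$ is normal in $E:=E_\pi(\Phi,R)$, so its image $\varphi(N_I)$ under the automorphism $\varphi$ is again normal in $E$. Applying Theorem~\ref{t3_1} to $\varphi(N_I)$ yields a uniquely determined ideal $J$ with $E_\pi(\Phi,R,J)\leqslant \varphi(N_I)\leqslant N_J$. Next I would identify the quotient: since $I$ is maximal, $k:=R/I$ is a field and $E/N_I\cong \lambda_I(E)=E_\pi(\Phi,k)$. Applying Theorem~\ref{t3_1} over the field $k$, whose only ideals are $0$ and $k$ and whose adjoint group has trivial center, shows that the only normal subgroups of $E_\pi(\Phi,k)$ are $1$ and itself; hence $E/N_I$, and therefore the isomorphic group $E/\varphi(N_I)$, is simple.

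Simplicity then finishes the matter almost at once. The subgroup $N_J$ is normal in $E$ and contains $\varphi(N_I)$, so $N_J/\varphi(N_I)$ is a normal subgroup of the simple group $E/\varphi(N_I)$ and is therefore trivial or everything. It cannot be everything: $N_J=E$ forces $J=R$, and then the sandwich would give $\varphi(N_I)=E_\pi(\Phi,R,R)=E$, contradicting $\varphi(N_I)\ne E$. Hence $N_J/\varphi(N_I)=1$, i.e. $\varphi(N_I)=N_J$. To see that $J$ is maximal I would use that $E/N_J\cong E_\pi(\Phi,R/J)$ is simple: if $R/J$ had a proper nonzero ideal $K/J$, then $N_J\subsetneq N_K\subsetneq E$ would provide a proper nontrivial normal subgroup $N_K/N_J$ of this simple quotient, a contradiction; thus $R/J$ is a field and $J$ is maximal.

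The only genuinely delicate point is the passage from the two-sided sandwich of Theorem~\ref{t3_1} to the exact equality $\varphi(N_I)=N_J$. For a general (non-adjoint) lattice the gap $N_J/E_\pi(\Phi,R,J)$ is a nontrivial central piece, and one must argue through its centrality in $E/E_\pi(\Phi,R,J)$; here the adjoint hypothesis removes this entirely by forcing $C_J=N_J$, so that simplicity of the quotient alone pins $\varphi(N_I)$ down. I expect the steps requiring the most care to be the verification of the triviality of the adjoint center over the various quotient rings, and the simplicity of $E_\pi(\Phi,k)$ over all admissible fields $k$ — it is precisely here that the rank~$>1$ assumption and the $1/2,1/3$ conditions enter, through the hypotheses of Theorem~\ref{t3_1}.
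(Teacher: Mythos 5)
Your argument, as written, proves a weaker statement than the one in question, and even that weaker statement rests on an unproved claim. Proposition~\ref{p4_1} is stated for $E_\pi(\Phi,R)$ with an arbitrary weight lattice $\pi$ — the paper specializes to the adjoint lattice only later, at the start of the proof of the first step — and the paper's proof is built precisely to cope with the possibility $C_K\neq N_K$. You assume adjointness from the first line, and without it both pillars of your argument fail simultaneously: for the simply connected lattice of type $A_2$, say, the quotient $E/N_I\cong E_\pi(\Phi,R/I)=\SL_3(R/I)$ can have nontrivial center (e.g.\ $R/I=\mathbb F_7$), hence is not simple, and the sandwich of Theorem~\ref{t3_1} does not collapse to $E_\pi(\Phi,R,K)\leqslant{\mathcal H}\leqslant N_K$. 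The genuinely hard content of the proposition is exactly the passage from $\varphi(C_I)=C_J$ (the easy part, via the characterization of maximal normal subgroups) to $\varphi(N_I)=N_J$ when $C$ and $N$ may differ; the paper supplies this by identifying $E_\pi(\Phi,R)/C_I\cong E_{\ad}(\Phi,R/I)$, applying Theorem~\ref{isom_fields} to the induced isomorphism of adjoint groups over the residue fields, disposing of the graph automorphism, and then killing the central factor $c$ in $\lambda_J\varphi(x_\alpha(t))=g\,x_\alpha(\rho(t+I))\,g^{-1}c$ by writing $x_\alpha(t)$ as a product of commutators. Your proof contains no substitute for this step in the stated generality.

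Second, even granting the adjoint reading (which is the only case the paper later uses), your key claim that $Z(G_{\ad}(\Phi,R/K))=1$ for \emph{every} ideal $K$ — you need it for the a priori non-maximal ideal $J$ produced by Theorem~\ref{t3_1}, in order to replace the upper bound $Z_\pi(\Phi,R,J)\cap E_\pi(\Phi,R)$ by $N_J$ — is not available from the paper's toolkit. The formula $Z(G_\pi(\Phi,S))\cong\Hom(\Lambda_\pi/\Lambda_{ad},S^*)$ over an arbitrary commutative ring $S$ is the Abe--Hurley theorem on centers of Chevalley groups~\cite{v18}; it is classical over fields, but over arbitrary quotient rings it is a genuine theorem (triviality of the scheme-theoretic center of the adjoint group does not by itself give triviality of the center of the abstract group of points), and the paper neither states it nor relies on it — its own proof of Proposition~\ref{p4_1} is deliberately agnostic about centers. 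The remaining components of your argument (simplicity of $E_{\ad}(\Phi,k)$ over residue fields via Theorem~\ref{t3_1}, exclusion of $N_J=E$, maximality of $J$) are correct, and, granting Abe--Hurley, your route would be a legitimate shortcut in the adjoint case; but as a proof of the proposition as stated it has the two gaps above.
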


\begin{proof}
It is clear that the group $C_I$ is normal in $E_\pi(\Phi,R)$. As it follows from  Theorem~\ref{t3_1}, for such a subgroup $G$  we have an inclusion
$$
E_I\subseteq G\subseteq  C_I,
$$
therefore the subgroups $C_I$, and only they are maximal normal subgroups of the group~$E_\pi(\Phi,R)$. Consequently, for a maximal ideal $I$ of the ring~$R$ there exists a maximal ideal $J$ of~$R$ such that $\varphi(C_I)=C_J$. Show that $\varphi(N_I)=N_J$.

Consider the group $G=E_\pi(\Phi,R)/C_I=E_\pi(\Phi,R)/ (Z_I\cap E_\pi(\Phi,R))$. It is isomorphic to $E_\pi(\Phi,R)\cdot Z_I/Z_I$. Now use the Isomorphism theorem, namely, let us factorize the both parts by~$C_I$.

As result we obtain
$E_\pi(\Phi,R/I) \cdot Z(G_\pi(\Phi,R/I))/ Z(G_\pi(\Phi,R/I))$. It is isomorphic to $G\cong E_\pi(\Phi,R/I)/(E_\pi(\Phi,R/I)\cap Z(G_\pi(\Phi,R/I))\cong E_{\ad}(\Phi,R/I)$. Therefore $E_\pi(\Phi,R)/C_I\cong E_{\ad}(\Phi,R/I)$.

 Since $\varphi(C_I)=C_J$, then the automorphism $\varphi$ induces an isomorphism $\overline \varphi$ of the groups $E_\pi(\Phi,R)/C_I\cong E_{\ad}(\Phi,R/I)$ and $E_\pi(\Phi,R)/C_J\cong E_{\ad}(\Phi,R/J)$ such that the diagramm
$$
\begin{CD}
E_\pi(\Phi,R) @> \varphi >> E_\pi(\Phi,R)\\
@VVV @VVV\\
E_{\ad}(\Phi,R/I)   @> \overline \varphi >> E_{\ad}(\Phi,R/J)
\end{CD}
$$
is commutative. Isomorphisms of the groups $E_{\ad}$ with root systems under consideration we have described in Theorem~\ref{isom_fields}. So we see that the fields $R/I$ and $R/J$ are  isomorphic (we denote the corresponding isomorphism by~$\rho$) and  $\overline \varphi( A) = i_{\overline g} \overline  \delta (\overline \rho (A))$ for every  $A\in E_{\ad}(\Phi,R/I)$, $\overline g\in G_{\ad}(\Phi,R/J)$,  $\delta$ is a graph automorphism of $G_{\pi}(\Phi,R/J)$.

Since a graph automorphism of the group $G_\pi(\Phi,R/J)$ can be expanded to a graph automorphism of the group   $E_\pi(\Phi,R)$, and the last one maps the group $N_J$ into itself, it is sufficient to consider the case, when the graph automorphism in the composition is identical.

We obtain that in the group $E_\pi(\Phi,R)$ there is the equality
$$
 \lambda_J\varphi (x_\alpha(t))=g (x_\alpha(\rho(t+I))) g^{-1} c,\quad
 c\in Z(E_\pi(\Phi,R/J)).
 $$
  Since $x_\alpha(t)$ is always (for the root systems under consideration) a product of commutators of elements $x_\beta(s)$, then the central element~$c$ disappears from the image. Therefore we have
$$
 \lambda_J\varphi (x_\alpha(t))=g (x_\alpha(\rho(t+I)) g^{-1}.
 $$

Let now $M=x_{\alpha_1}(t_1)\dots x_{\alpha_k}(t_k)$ be an arbitrary element of~$N_I$. Then
 $$
 \lambda_J \varphi (M)=g (x_{\alpha_1}(\rho (t_1+I))\dots x_{\alpha_k}(\rho(t_k+I)))g^{-1}=g (\overline \rho \lambda_I(M)) g^{-1}=E.
 $$
   Consequently $\varphi  (N_I)\subseteq  N_J$. Clear that the inclusion   $\varphi^{-1} (N_J)\subseteq N_I$ is proved similarly. So
   $\varphi(N_I)=N_J$.
   \end{proof}

   Consider a ring $R$ and its maximal ideal~$I$. We denote the localization $R$ with~$I$ by~$R_I$ again, and its radical (the greatest ideal) we denote by $\Rad R_I$. Note that we have to isomorphic fields  $R/I$ and $R_I/\Rad R_I$.
   Therefore we  can turn the arrow $\mu_I$ in the diagram
   $$
   \begin{CD}
   R @>>> R_I\\
   @V\lambda_I VV @VV \lambda_{\Rad R_I}V\\
   R/I @>\mu_I >> R_I/\Rad R_I
   \end{CD}
   $$

   Let now $\varphi$ be an arbitrary automorphism of $E_\pi(\Phi,R)$. Proposition \ref{p4_1} gives us a possibility to consider the commutative diagram
   \begin{equation}\label{diagramm}
   \begin{CD}
   E_\pi(\Phi,R) @> \varphi >>  E_\pi(\Phi,R)\\
   @V r_I VV @VVr_J V\\
   E_\pi(\Phi,R_I) @. E_\pi(\Phi,R_J)\\
   @V \lambda_{\Rad R_I}VV @VV \lambda_{\Rad R_J}V\\
   E_\pi(\Phi,R_I/\Rad R_I) @. E_\pi(\Phi,R_J /\Rad R_J)\\
   @V s_I VV @VV s_J V \\
   E_\pi (\Phi,R/I) @> \overline \varphi >> E_\pi(\Phi,R/J)
   \end{CD}
   \end{equation}

     The groups $E_\pi(\Phi,R/I)$ and $E_\pi(\Phi,R/J)$ are just elementary Chevalley groups over fields, their isomorphisms we have already described in Theorem~\ref{isom_fields}.

    Recall that the fields $R/I$ and $R/J$ are isomorphic  (as earlier we denote the corresponding isomorphism by~$\rho$), and also
    $$
    \overline \varphi (A) = i_g \circ f\circ \delta_i \rho(A)\ \forall A\in E_\pi(\Phi,R/I),\quad  g\in G_\pi(\Phi, R/J),
    $$
here $\delta_i$ is one of graph automorphisms, $f$ is a diagonal automorphism.

    The description of automorphisms of the group $E_\pi(\Phi,R)$ can be made by the following scheme. The ring $R$ is embedded into the ring $S=\prod R_I$, which is the Cartesian product of all local rings~$R_I$, obtained by localization  the ring~$R$ with different maximal ideals~$I$. We denote by $R_i$ the ring $\prod R_I$, where maximal ideals are taken such that in the composition we have namely the graph automorphism  $\delta_i$. Clear that $S=R_1\oplus \dots \oplus R_k$. Let in the ring~$S$ $a_i=(0,\dots,0,1,0,\dots,0)$.

    Clear that the group $E_\pi(\Phi,R)$ is embedded in
        $$
    G_\pi(\Phi,S)=G_\pi(\Phi,\prod R_I)=G_\pi(\Phi,R_1\oplus  \dots \oplus  R_k)=G_\pi(\Phi,R_1)\times \dots \times G_\pi(\Phi,R_k).
    $$

    {\bf The first step.}
    We prove that for every maximal ideal~$J$
    $$
    r_J  \varphi (x_\alpha(1))=i_{g_J} \delta_i r_J(x_\alpha(1)) ,
    $$
    where $g_J\in G_\pi(\Phi,\overline{R_J})$ (an extension of the ring $R_J$), $i$ is such that  $R_J\in R_i$.

    {\bf The second step.}

 We consider adjoint Chevalley groups.

We show that actually the idempotents $a_i$ belong to the ring~$R$, and the inner automorphism of $G_\pi(\Phi,S)$, generated by $g=\prod g_J$, induced an automorphism of the group~$G_\pi(\Phi,R)$.

Then we show that if we take the composition of the initial automorphism, the inner automorphism  $i_{g^{-1}}$ and the graph automorphism $\Lambda_{a_1,\dots,a_k}$, then the obtained automorphism is ring.

\medskip

Now suppose that the both steps are proved. Then we have the description of automorphisms of the elementary subgroup
 $E_\pi(\Phi,R)$, and also we know that in the composition there is no central automorphism.

If we have now some automorphism of the group~$G_{\ad}(\Phi,R)$, then it induces an automorphism of the group $E_{\ad}(\Phi,R)$ (see Theorem~\ref{character}), which is standard (the composition of ring, inner and graph automorphisms). All these three automorphisms of the group $E_{\ad}(\Phi,R)$ are extended to the automorphisms of the group~$G_{\ad}(\Phi,R)$. Therefore multiplying   $\varphi$ to the suitable
standard automorphism, we can assume that  $\varphi$ is identical on the subgroup $E_{\ad}(\Phi,R)$.
As above it means
$$
\forall A\in E_{\ad}(\Phi,R) \forall g\in G_{\ad}(\Phi,R) gAg^{-1}=\varphi(gAg^{-1})=\varphi(g) A \varphi(g)^{-1},
$$
Consequently $\varphi(g)g^{-1}$ commutes with all $A\in E_{\ad}(\Phi,R)$, i.\,e., it belongs to the center of~$G_{\ad}(\Phi,R)$. Therefore, $\varphi$ is a central automorphism. $\square$

\section{Proof of the first step in the theorem.}\leavevmode

For our convenience we will suppose that a Chevalley group under consideration is adjoint.

A graph automorphism of $G_{\ad}(\Phi, R/J)$ in the diagram~\eqref{diagramm} can be expanded to an automorphism of the whole group~$E_{\ad}(\Phi,R)$. Therefore we can assume the automorphism  $\varphi$ such that $\overline \varphi = i_{\overline g} \circ \overline \delta$ (according to the fact that adjoint Chevalley groups over fields have no diagonal automorphisms).

Consider an arbitrary element $x_\alpha(1)\in E_{\ad} (\Phi, R)$, $\alpha\in \Phi$. Its image  under the mapping $r_I$ is also $x_\alpha (1)=x_\alpha (1/1)\in E_{\ad} (\Phi, R_I)$. In the field $R/I$ its image has the same form. The element $x_\alpha'=\varphi(x_\alpha(1))\in E_{\ad} (\Phi, R)$ being factorized by the ideal~$J$ gives $\overline \varphi(x_\alpha(1))=i_{\overline g}  (x_\alpha (1))$, where $\overline g\in G_{\ad}(\Phi,R/J)$.

Choose now $g\in G_{\ad}(\Phi,R_J)$ such that under factorization $R_J$ by its radical the element $g$ corresponds to~$\overline g$.

Now consider the following mapping $\psi: E_{\ad} (\Phi, R)\to E_{\ad} (\Phi, R_J)$:
$$
\psi = i_{g^{-1}} \circ r_J\circ \varphi.
$$
Under  $\psi$ all $x_\alpha (1)$, $\alpha\in \Phi$, are corresponded to such $x_\alpha'$, that $x_\alpha(1)-x_\alpha'\in M_N(\Rad R_J)$.

Therefore we obtain a set of elements $\{ x_\alpha' \mid \alpha \in \Phi\}\subset E_{\ad} (\Phi,R_J)$, satisfying all the same conditions as $\{ x_\alpha (1)\mid \alpha \in \Phi\}$, and also equivalent to $x_\alpha(1)$ modulo radical of~$R_J$.

It is precisely the situation of papers \cite{normalizers}, \cite{bunF4}, \cite{korni2}, \cite{BunBl}, \cite{without2}, where for a local ring~$S$ and root systems $A_2, B_l, C_l, F_4$ for $2\in S^*$, $G_2$ for $2,3\in S^*$, the root systems $A_l$, $l\geqslant 3$, $D_l$, $E_6,E_7, E_8$ without any additional conditions it was proved that if in the group $E_{\ad} (\Phi, S)$ some elements $x_\alpha'$ are the images of the corresponding  $x_\alpha(1)$, $\alpha\in \Phi$, and also $x_\alpha(1)-x_\alpha'\in M_N(\Rad S)$, then there exists $g'\in G_{\ad} (\Phi, S)$ , $g'-E\in M_N(\Rad S)$,  such that for every $\alpha\in \Phi$
$$
x_\alpha(1)= i_{g'} (x_\alpha').
$$

Therefore the first step of our theorem completely follows from the above statement. $\square$

Embedding now the initial ring $R$ into the ring $S=\prod\limits_J R_J$, we see that
$$
\varphi(x_\alpha(1))=\Lambda_{e_1,\dots,e_k} g (x_\alpha(1)) g^{-1},
$$
where $g=\prod\limits_J g_J$, $e_i$ are idempotents of~$S$,  introduced above.

\section{Proof of the second step in the theorem.}\leavevmode

We know now that the automorphism $\varphi$ satisfies the equality
$$
\varphi(x_\alpha(1))=g x_{\delta_1(\alpha)}(e_1)\dots x_{\delta_k(\alpha)}(e_k) g^{-1}\in E_{\ad}(\Phi,R), \quad g=\prod g_J.
$$

Note that for the root systems $B_l, C_l, E_7, E_8, F_4, G_2$ $k=1$, for the systems $A_l, D_l$ ($l\geqslant 5$), $E_6$ $k=2$,
for the system $D_4$ $k=6$.

As above we assume now that the Chevalley groups $G_\pi(\Phi, R)$ and $G_\pi(\Phi, S)$ are adjoint, i.\,e., $\pi=\ad$.

In this case every graph automorphism of the Chevalley group $G_{\ad}(\Phi,S)$ is realized by some matrix
 $\Lambda=e_1\Lambda_1+\dots +e_k
\Lambda_k\in \GL_N(S)$, the matrices $\Lambda_1,\dots, \Lambda_k$ have integer koefficients.

Therefore the composition of conjugation by $g\in G_{\ad}(\Phi,S)$ and the graph automorphism (denote it by~$\psi$)
can be continued to the whole matrix ring $M_N(S)$.

\begin{lemma}\label{generate}
Under all theorem assumptions the elements $x_\alpha(1)$, $\alpha\in \Phi$, by addition and multiplication
generate the whole basis $X_\alpha$, $\alpha\in \Phi$, of the Lie algebra ${\mathcal L}(\Phi)$.
\end{lemma}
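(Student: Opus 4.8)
The plan is to work inside the matrix ring $M_N(S)$ and to recover each $X_\alpha$ from the unipotents $x_\alpha(1)=\exp(X_\alpha)$ by purely ring-theoretic manipulations. Write $\mathcal A$ for the subring of $M_N(S)$ generated by all $x_\alpha(1)$ under addition and multiplication, and set $N_\alpha=x_\alpha(1)-1=X_\alpha+\tfrac12 X_\alpha^2+\tfrac16 X_\alpha^3+\cdots$, a finite sum since $X_\alpha=\ad x_\alpha$ is nilpotent. The adjoint module $\mathcal L(\Phi)=\bigoplus_\mu \mathcal L_\mu$ carries a grading by the root lattice, and an endomorphism has \emph{degree} $\gamma$ if it sends each $\mathcal L_\mu$ into $\mathcal L_{\mu+\gamma}$; with respect to this grading $X_\alpha^k$ is homogeneous of degree $k\alpha$, so the degree-$\alpha$ homogeneous component of $N_\alpha$ is exactly $X_\alpha$ (with coefficient $1$). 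Thus everything reduces to extracting that homogeneous component from $N_\alpha$ using only elements of $\mathcal A$.

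First I would check that $1\in\mathcal A$: squaring $x_\alpha(1)$ gives $x_\alpha(2)$, and a short combination of $x_\alpha(1)$, $x_\alpha(2)$ and their products produces $X_\alpha^2-1$, then $-1$, hence $1$ and each power $X_\alpha^{k}$. Consequently $N_\alpha\in\mathcal A$ together with all its powers $N_\alpha^{\,j}$, where $N_\alpha^{\,j}$ has lowest degree $j\alpha$ and leading term $X_\alpha^{\,j}$. This yields a triangular system in $X_\alpha,X_\alpha^2,X_\alpha^3$, and solving it for $X_\alpha$ introduces precisely the denominators $2!$ and $3!$. When $\ad x_\alpha$ has nilpotency index $3$ only $\tfrac12$ is needed, and when it reaches $4$ (the system $G_2$, where $\alpha$-strings have length up to four) one also needs $\tfrac13$. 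This is exactly the source of the hypotheses $1/2\in R$ and $1/2,1/3\in R$; the multiply-laced systems $B_l,C_l,F_4$ require $\tfrac12$ also because of structure constants $\pm2$, and the rank-two system $A_2$ requires it because, as explained below, the commutator route has too little room.

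For the simply-laced systems of rank $\geq 3$, namely $A_l\ (l\geq 3), D_l, E_6,E_7,E_8$, where no inverse of $2$ is assumed, I would argue differently. Here the nilpotency index is again $3$ but $X_\alpha^2$ is supported only on the string $\mathcal L_{-\alpha}\to\mathcal L_{\alpha}$, so that $N_\alpha^2=X_\alpha^2\in\mathcal A$ and $2X_\alpha=2N_\alpha-N_\alpha^2\in\mathcal A$. To remove the spurious factor $2$ I would pass to commutators: for a root $\beta$ with $\alpha+\beta\in\Phi$ the structure constant is $\pm1$, so the degree-$(\alpha+\beta)$ component of $[N_\alpha,N_\beta]$ equals $[X_\alpha,X_\beta]=\pm X_{\alpha+\beta}$. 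Isolating this single homogeneous component — its source and target weight spaces differing from those of the higher-degree components of the bracket, which is where rank $\geq 3$ gives enough room — yields $X_{\alpha+\beta}$ with coefficient $\pm1$. Chaining such brackets along the connected Dynkin diagram and transporting through the Weyl group then produces every $X_\gamma$ with a unit coefficient.

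I expect the genuine obstacle to be exactly this separation of homogeneous components: disentangling $X_\alpha$ from $X_\alpha^2$ and $X_\alpha^3$ is what forces the invertibility of $2$ and $3$ in the multiply-laced and rank-two cases, and in the higher-rank simply-laced case it must be carried out integrally through commutators with unit structure constants. The delicate points are verifying that the graded projections genuinely lie in $\mathcal A$ rather than merely in the ambient graded algebra, and checking the nilpotency indices and the $\pm1$ normalisations uniformly across all admissible $\Phi$.
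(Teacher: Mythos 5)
Your treatment of the cases where $2$ (and, for $G_2$, also $3$) is invertible is correct and is essentially the paper's own argument: invert the truncated exponential series, which is exactly where the denominators $2!$ and $3!$ enter. (The preliminary worry about whether $E\in\mathcal A$ is a non-issue for the intended application, since the map one applies to the generating expressions is a ring endomorphism of $M_N(S)$ fixing $E$ and the $R$-scalars.) The genuine gap is in the remaining case --- the simply-laced systems $A_l$ ($l\geqslant 3$), $D_l$, $E_l$ over a ring where $2$ need not be invertible --- which is precisely the case where the lemma has content. There you correctly observe that $N_\alpha^2=X_\alpha^2$ and $2X_\alpha=2N_\alpha-N_\alpha^2$ lie in $\mathcal A$, but then you pass to $[N_\alpha,N_\beta]$ and merely assert that its degree-$(\alpha+\beta)$ homogeneous component $\pm X_{\alpha+\beta}$ can be ``isolated'' because rank $\geqslant 3$ gives enough room. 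That isolation is the whole difficulty: the commutator also contains the generally nonzero terms $[X_\alpha,X_\beta^2/2]$, $[X_\alpha^2/2,X_\beta]$, $[X_\alpha^2/2,X_\beta^2/2]$ of degrees $\alpha+2\beta$, $2\alpha+\beta$, $2\alpha+2\beta$, and projection onto a graded component is not a ring operation, so membership of the commutator in $\mathcal A$ does not give membership of its components. You acknowledge this yourself (``the delicate points are verifying that the graded projections genuinely lie in $\mathcal A$'') but supply no mechanism; the standard ways of realizing such projections (averaging over torus conjugation, Vandermonde-type interpolation in $h_\alpha(u)$) reintroduce denominators or elements outside $\mathcal A$, which is exactly what is forbidden here. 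So the proof is incomplete at its critical point.

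For comparison, the paper closes this case not by a projection argument but by a concrete integral identity: writing $\alpha=\gamma+\beta$ with $\gamma,\beta\in\Phi$, in the adjoint representation one has $X_\alpha^2/2=E_{\alpha,-\alpha}$ (a matrix unit), and
$$
\bigl(x_\gamma(1)x_\beta(1)-x_\gamma(1)-x_\beta(1)+E\bigr)^2=\bigl((x_\gamma(1)-E)(x_\beta(1)-E)\bigr)^2=E_{\alpha,-\alpha},
$$
so that $X_\alpha=(x_\alpha(1)-E)-E_{\alpha,-\alpha}$ is obtained with no division at all. In other words, the paper manufactures the ``separating'' element $E_{\alpha,-\alpha}$ explicitly inside $\mathcal A$ and subtracts it, rather than invoking an abstract graded projection. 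If you want to rescue your commutator route, you would have to exhibit such matrix units (or comparable separating elements) inside $\mathcal A$ by explicit polynomial expressions in the $x_\beta(1)$; once you have $E_{\alpha,-\alpha}\in\mathcal A$, you are in effect reproducing the paper's computation.
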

\begin{proof}
If the root system differs from $G_2$ and $1/2\in R$, then $x_\alpha (1)=E+X_\alpha  + X_\alpha^2/2$, therefore
$X_\alpha=x_\alpha(1)-E-(x_\alpha(1)-E)^2/2$.

For the root system $G_2$ and a short root~$\alpha$ we have $x_\alpha(1)=E+X_\alpha+X_\alpha^2/2+X_\alpha^3/6$. We  suppose that
 $1/6\in R$, then $X_\alpha^3/6=(x_\alpha(1)-E)^3/6$, $X_\alpha^2/2=(x_\alpha(1)-E)^2/2-X_\alpha^3$, therefore we easily get
$X_\alpha$.

Suppose now that we deal with systems $A_l$, ($l\geqslant 3$),  $D_l, E_l$, two is not invertible.

In this case $x_\alpha(1)=E+X_\alpha+ X_\alpha^2/2$, where $X_\alpha^2/2=E_{\alpha,-\alpha}$. Choose any two roots
 $\gamma$, $\beta\in \Phi$ so that $\gamma+\beta=\alpha$. Using the condition
$$
(x_\gamma(1)x_\beta(1)-x_\gamma(1)-x_\beta(1)+E)^2=E_{\alpha,-\alpha},
$$
we obtain $X_\alpha$.

The lemma is proved.

\end{proof}

  From Lemma \ref{generate} we see that the automorphism $\psi$ of the matrix ring $M_N(S)$ maps  the matrices $X_\alpha$, $\alpha\in \Phi$,
   into the matrices with coefficients from the ring~$R$. Therefore any matrix from ${\mathcal L}(\Phi,R)$ under the action
   of the conjugation $\psi$ is mapped to a matrix from $M_N(R)$.

   Since $x_\alpha (t)=E+t X_\alpha+ t^2 X_\alpha/2+\dots$ for any $\alpha\in \Phi$, $t\in R$,
   then every matrix $x_\alpha(t)$ under the action of  $\psi$ is mapped to a matrix from $\GL_N(R)$.
   Consequently $\psi(E_{\ad}(\Phi,R))\subset \GL_N(R)$.

  From another side, the conjugation $\psi$ is the composition of inner and graph automorphisms of the Chevalley group
   $G_{\ad}(\Phi,S)$ (and its elementary subgroup $E_{\ad}(\Phi,S)$), so it is an automorphism of the group $E_{\ad}(\Phi,S)$.
   Since the image of the Chevalley group $E_{\ad}(\Phi,R)$ under $\psi$ belongs to the Chevalley group $E_{\ad}(\Phi,S)$
   and also to the ring $M_N(R)$, then $\psi(E_{\ad}(\Phi,R))=E_{\ad}(\Phi,R)$.

  Therefore taking the composition of the initial automorphism $\varphi\in \Aut (E_{\ad}(\Phi,R))$
  and the automorphism $\psi^{-1}\in \Aut(E_{\ad}(\Phi,R))$, we obtain some automorphism
   $\rho = \psi^{-1}\circ \varphi\in \Aut (E_{\ad}(\Phi,R))$ such that $\rho(x_{\alpha}(1))=x_{\alpha}(1)$ for every $\alpha\in \Phi$.

\begin{lemma}\label{ringaut}
Under the initial assumptions of the theorem $\rho$ is a ring automorphism of the Chevalley group.
\end{lemma}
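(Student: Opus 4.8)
The plan is to exhibit a ring automorphism $\sigma\colon R\to R$ such that $\rho(x_\alpha(t))=x_\alpha(\sigma(t))$ for all $\alpha\in\Phi$ and $t\in R$; this is precisely the assertion that $\rho$ is the ring automorphism of $E_{\ad}(\Phi,R)$ induced by $\sigma$. The engine of the argument is localization: by Proposition \ref{inlocal} we embed $R$ into $S=\prod_J R_J$, the product of all localizations at maximal ideals, and reduce the global statement to the already settled local one.

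First I would pass to each factor. Since $\rho$ fixes every generator $x_\alpha(1)$, it induces for each maximal ideal $J$ an automorphism $\rho_J$ of $E_{\ad}(\Phi,R_J)$ (a possible permutation of the maximal ideals coming from Proposition \ref{p4_1} is harmless, as explained below) with $\rho_J(x_\alpha(1))=x_\alpha(1)$ for all $\alpha$. Over the local ring $R_J$ the automorphisms are completely described in \cite{normalizers}, \cite{bunF4}, \cite{korni2}, \cite{BunBl}, \cite{without2}: $\rho_J$ is standard. An automorphism fixing all $x_\alpha(1)$ can have no nontrivial graph part, because a graph automorphism would carry $x_\alpha(1)$ to $x_{\delta(\alpha)}(\pm 1)$; it has no central part, since the adjoint group has trivial centre; and it has no nontrivial inner part, since the only element of $G_{\ad}(\Phi,R_J)$ centralizing all root elements (equivalently, centralizing $E_{\ad}$) is central, hence trivial. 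Therefore $\rho_J$ is the ring automorphism attached to a ring automorphism $\sigma_J$ of $R_J$, i.e. $\rho_J(x_\alpha(t))=x_\alpha(\sigma_J(t))$ for all $\alpha$ and $t$, uniformly in $\alpha$.

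The main obstacle is exactly this local identification: stripping off the inner, graph and central contributions over each $R_J$ and concluding that nothing but a ring automorphism survives. This is where the hypotheses that $2$ (and, for $G_2$, also $3$) be invertible are genuinely used, through the cited local theorems and through Lemma \ref{generate}, which guarantees that the $x_\alpha(1)$ recover the Chevalley basis and hence rigidify the situation. Everything else is bookkeeping.

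Once the $\sigma_J$ are in hand I would reassemble them. The element $\rho(x_\alpha(t))$ lies in $E_{\ad}(\Phi,R)\subseteq\GL_N(R)$, and its image in each factor equals $x_\alpha(\sigma_J(t))$; thus under $R\hookrightarrow S$ we have $\rho(x_\alpha(t))=x_\alpha\bigl((\sigma_J(t))_J\bigr)$. Because the matrix entries of $\rho(x_\alpha(t))$ already lie in $R$, the element $\sigma(t):=(\sigma_J(t))_J$ lies in $R$, so $\sigma$ is a map $R\to R$; as the restriction to $R$ of the product of the ring homomorphisms $\sigma_J$ (which may permute the factors but still restricts to a self-map of $R$, whence the permutation is harmless) it is a ring homomorphism, independent of $\alpha$, and applying the same construction to $\rho^{-1}$ yields its two-sided inverse, so $\sigma$ is a ring automorphism. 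Hence $\rho(x_\alpha(t))=x_\alpha(\sigma(t))$ for all $\alpha$ and $t$, which is the claim. As an independent check one can verify additivity and multiplicativity of $\sigma$ directly on $R$: additivity from $x_\alpha(t)x_\alpha(u)=x_\alpha(t+u)$, and multiplicativity from the Chevalley commutator formula $[x_\alpha(t),x_\beta(u)]=x_{\alpha+\beta}(N_{\alpha\beta}tu)$ with $N_{\alpha\beta}=\pm1$ for a suitable pair $\alpha,\beta$ with $\alpha+\beta\in\Phi$, together with the fact that $\rho$ fixes the Weyl elements $w_\alpha(1)=x_\alpha(1)x_{-\alpha}(-1)x_\alpha(1)$, which forces the $\sigma_\alpha$ to agree across roots of both lengths.
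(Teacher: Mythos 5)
Your plan founders one step earlier than where you locate the difficulty. The crux is your claim that, because $\rho$ fixes every $x_\alpha(1)$, it ``induces for each maximal ideal $J$ an automorphism $\rho_J$ of $E_{\ad}(\Phi,R_J)$.'' No such induced automorphism exists a priori. The localization homomorphism $r_J\colon E_{\ad}(\Phi,R)\to E_{\ad}(\Phi,R_J)$ is neither injective nor surjective: its image is the subgroup generated by the elements $x_\alpha(t/1)$ with $t\in R$, whereas $E_{\ad}(\Phi,R_J)$ needs generators $x_\alpha(a/s)$ with denominators $s\notin J$, and there is no formula expressing $\rho_J(x_\alpha(a/s))$ in terms of $\rho$. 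To even get a well-defined map on the image subgroup you would need $\kerr r_J$ to be $\rho$-invariant, and this kernel is \emph{not} the subgroup $N_J$ of Proposition \ref{p4_1}: that proposition concerns the kernels of reduction modulo maximal ideals onto the residue fields $R/J$, not the kernels of localization maps, so it cannot be quoted here (this is why your parenthetical about ``permutation of the maximal ideals'' does not repair anything). And even granting an automorphism of the image subgroup $r_J(E_{\ad}(\Phi,R))$, the local theorems of \cite{normalizers}, \cite{bunF4}, \cite{korni2}, \cite{BunBl}, \cite{without2} classify automorphisms of the \emph{full} group $E_{\ad}(\Phi,R_J)$ (or rigidify tuples of elements congruent to $x_\alpha(1)$ modulo $\Rad R_J$), so they do not apply to it.

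This is precisely the difficulty the paper's architecture is built to avoid: localization and the local results are spent entirely in the \emph{first} step (passing through the residue fields via Proposition \ref{p4_1} and Theorem \ref{isom_fields}, then lifting with the mod-radical rigidity statements), and by the time one reaches Lemma \ref{ringaut} those tools are no longer available. The paper therefore proves this lemma by direct matrix computation over $R$ itself: since $\rho$ fixes all $x_\alpha(\pm1)$, hence all $w_\alpha(1)$ and $h_\alpha(-1)$, the element $y=\rho(x_\alpha(t))$ commutes with, and satisfies commutator identities against, an explicit list of fixed matrices; a case analysis ($A_2$, $B_2$, $G_2$, then $B_l,C_l,F_4$ with $1/2$, then $A_3$ and the general simply-laced case without $1/2$) forces $y=x_\alpha(s)$ with $s$ independent of $\alpha$, after which additivity and multiplicativity of $t\mapsto s$ follow as in your final paragraph. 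Your closing remarks (inverse via $\rho^{-1}$, the commutator formula, Weyl elements tying the two root lengths together) match the paper's endgame, but they rest on the unproven local identification, so the proposal as written has a genuine gap.
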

\begin{proof}
At first we suppose that in the ring~$R$ the element $2$ is invertible (for the root system~$G_2$ also $1/3\in R$).

Our first step is to prove lemma for the root system $A_2$. In the system $A_2$ there are six roots:
 $\pm \alpha_1, \pm \alpha_2 \pm \alpha_3=\pm (\alpha_1+\alpha_2)$
 (detailed matrices for the root system $A_2$ can be found in the paper~\cite{ravnyekorni}).

Let $\rho (x_{\alpha_1}(t))=y$.
Note that $y$ commutes with
$$
h_{\alpha_1}(-1)=\diag[1,1,-1,-1,-1,-1,1,1],
$$
 therefore we get that
$y$ is block-diagonal up to the basis parts $\{\alpha_1,-\alpha_1, h_1,h_2\}$ and
$\{\alpha_2,-\alpha_2,\alpha_1+\alpha_2, -\alpha_1-\alpha_2\}$. Then $y$ commutes with
 $x_{\alpha_1}(1)$, $x_{-\alpha_2}(1)$ and $x_{\alpha_1+\alpha_2}(1)$ so by direct calculations we obtain
$$
y=\begin{pmatrix}
y_{1,1}& y_{1,2}&  0& 0& 0& 0& y_{1,7}&  y_{1,7}+3y_{7,2}\\
 0& y_{1,1}& 0& 0& 0& 0& 0& 0\\
 0& 0& y_{1,1}& 0& 0& 0& 0& 0\\
0& 0& 0& y_{1,1}&  0& 2y_{1,7}+3y_{7,2}& 0& 0\\
 0& 0& y_{1,7}+3y_{7,2}& 0& y_{1,1}&  0& 0& 0\\
0& 0& 0& 0& 0& y_{1,1}& 0& 0\\
0& y_{7,2}& 0& 0& 0& 0& y_{1,1}& 0\\
0& y_{1,7}+2y_{7,2}&0& 0& 0& 0& 0& y_{1,1}
\end{pmatrix}.
$$

Besides that we have the condition
$$
y x_{\alpha_2}(1)-w_{\alpha_2}(1) y w_{\alpha_2}(1)^{-1} x_{\alpha_2}(1) y,
$$
which gives, at first, $y_{1,1}(1-y_{1,1})=0$. Since $\det y = y_{1,1}^8$, the $y_{1,1}$ is invertible, so
 $y_{1,1}=1$. Also this condition gives $z_{1,2}=-z_{1,7}^2/4$, $z_{7,2}=-z_{1,7}/2$.

Hence for the root system $A_2$ the assumption is proved.

\medskip

Let us now deal with the root system $B_2$.
Recall that in this system there are roots
 $\pm \alpha_1=\pm(e_1-e_2), \pm \alpha_2=\pm e_2,
 \pm \alpha_3=\pm (\alpha_1+\alpha_2)=\pm e_1, \pm \alpha_4=\pm (\alpha_1+2\alpha_2)=\pm (e_1+e_2)$
 (detailed matrices for this system can be found in the papers \cite{korni2} and \cite{BunBl}).

Consider now $\rho (x_{\alpha_2}(t))=y$ (it is $10\times 10$ matrix).
Note that $y$ commutes with $x_{\alpha_2}(1)$, $x_{-\alpha_1}(1)$, $x_{\alpha_4}(1)$, and also with $w_{\alpha_3}(1)$.
Using this condition we obtain directly (we have not use yet the whole conditions), that
$$
y=\begin{pmatrix}
y_{1,1}& 0& 0& 0& 0& 0& 0& 0& 0& 0\\
 y_{2,1}& y_{1,1}&  0& y_{2,4}& 0& y_{2,6}&  0 & y_{2,8}&  y_{2,9} & y_{2,10}\\
y_{3,1}&  0& y_{1,1}& y_{3,4}& y_{3,5}& y_{3,6}&  0& y_{3,8}& y_{3,9} & y_{3,10}\\
0& 0& 0& y_{1,1}& 0& 0& 0& 0& 0& 0\\
 y_{5,1}& 0& 0& y_{5,4}& y_{1,1}& 0& 0& 0& 0& 0\\
0& 0& 0& y_{6,4}& 0& y_{1,1}&  0& y_{6,8}& 0& 0\\
y_{7,1}& 0& 0& y_{7,4}& y_{7,5}& 0 y_{1,1}& y_{7,8}& y_{7,9}& y_{7,10}\\
0& 0& 0& 0& 0& 0& 0& y_{1,1}& 0& 0\\
y_{9,1}& 0& 0& 0& 0& 0& 0& y_{9,8}& y_{1,1}& 0\\
0& 0& 0& y_{10,4}& 0& 0& 0& y_{10,8}& 0& y_{1,1}
\end{pmatrix}.
$$

Again the determinant of~$y$ is $y_{1,1}^{10}=1$, therefore $y_{1,1}$ is invertible.

Use the condition $h_{\alpha_1}(-1) y h_{\alpha_1}(-1) y=E$, which implies
 $y_{2,1}=y_{2,9}=y_{2,10}=y_{3,5}=y_{3,6}=y_{5,4}=y_{6,4}=y_{9,8}=y_{9,1}=y_{10,8}=y_{7,8}=y_{7,9}=y_{7,10}=0$.
 Besides, $y_{1,1}^2=1$.

Let now $\rho(x_{\alpha_1}(t))=z$. The  matrix $z$ commutes with
$$
h_{\alpha_1}(-1)=\diag[1,1,-1,-1,-1,-1,1,1,1,1],
 $$
 therefore it is block-diagonal up to the basis parts
  $\pm \alpha_1, \pm \alpha_4, h_1,h_2$ and $\pm \alpha_2, \pm \alpha_3$.
  Now we use the fact that $z$ commutes with
   $x_{\alpha_1}(1)$, $x_{\alpha_4}(1)$, $x_{-\alpha_4}(1)$, $x_{\alpha_3}(1)$, after that we directly obtain that $z$ has the form
$$
\begin{pmatrix}
z_{1,1}& z_{1,2}& 0& 0& 0& 0& 0& 0& -2z_{1,10}& z_{1,10}\\
 0& z_{1,1}&  0& 0& 0& 0&  0 & 0&  0 & 0\\
0&  0& z_{1,1}& 0& 0& 0&  0& 0& 0& 0\\
0& 0& 0& z_{1,1}& 0& z_{1,10}& 0& 0& 0& 0\\
 0& 0& -z_{1,10}& 0& z_{1,1}& 0& 0& 0& 0& 0\\
0& 0& 0& 0& 0& z_{1,1}&  0& 0& 0& 0\\
0& 0& 0& 0& 0& 0& z_{1,1}& 0& 0& 0\\
0& 0& 0& 0& 0& 0& 0& z_{1,1}& 0& 0\\
0& z_{1,10}& 0& 0& 0& 0& 0& 0& z_{1,1}& 0\\
0& 0& 0& 0& 0& 0& 0& 0& 0& z_{1,1}
\end{pmatrix}.
$$
Therefore, as above, the element $z_{1,1}$ is invertible.

The matrices $y$ and $z$ by the conditions
$$
y\cdot x_{\alpha_3}(1)= w_{\alpha_2}(1) z^2 w_{\alpha_2}(1)^{-1} x_{\alpha_3}(1)\cdot y
$$
and
$$
z\cdot x_{\alpha_2}(1)= w_{\alpha_1}(1) y w_{\alpha_1}(1)^{-1} w_{\alpha_2}(1) z^2 w_{\alpha_2}(1)^{-1} x_{\alpha_2}(1)\cdot z.
$$
From these conditions, taking into account  invertibility of $z_{1,1}$ and the condition $y_{1,1}^2=1$, we get
 $y=x_{\alpha_2}(z_{1,10})$, $z=x_{\alpha_1}(z_{1,10})$.

The case $B_2$ is studied.

\medskip

The next root system under consideration is $G_2$, also recall that we suppose $2,3\in R^*$.

In the root system $G_2$ there are $12$ roots:
 $\pm \alpha_1, \pm\alpha_2, \pm \alpha_3=\pm (\alpha_1+\alpha_2),
 \pm \alpha_4=\pm (2\alpha_1+\alpha_2), \pm \alpha_5=\pm (3\alpha_1+\alpha_2), \pm \alpha_6=\pm (3\alpha_1+2\alpha_2)$
 (detailed matrices for this system can be found in the paper \cite{korni2}).

For the beginning we consider $y=\rho(x_{\alpha_2}(t))$. Directly from the fact that $y$ commutes with
 $h_{\alpha_2}(-1)$, $x_{\alpha_2}(1)$, $x_{-\alpha_1}(1)$, $x_{\alpha_4}(1)$, $x_{-\alpha_4}(1)$, $x_{\alpha_6}(1)$,
 we obtain $y=y_1 E+y_2 X_{\alpha_2} + y_3 X_{\alpha_2}^2$.

The condition $h_{\alpha_1}(-1) y h_{\alpha_1}(-1) y=E$ gives $y_1^2=1$ and $y_1 y_3=-y_2^2$.

The condition $y\cdot x_{\alpha_5}(1)-w_{\alpha_1}(1) y w_{\alpha_1}(1)^{-1} x_{\alpha_5}(1)\cdot y$ gives $y_1^2=y_1$,
so $y_1=1$ and after that $y_3=-y_2^2$. Consequently, $y=x_{\alpha_2}(y_3)$.

Let now $z=\rho(x_{\alpha_1}(t))$. Again from commuting with $h_{\alpha_1}(-1)$,
$x_{\alpha_1}(1)$, $x_{-\alpha_2}(1)$, $x_{\alpha_5}(1)$, $x_{\alpha_6}(1)$, $x_{-\alpha_6}(1)$ we directly  obtain that $z$  has the form
$$
\begin{pmatrix}
z_1& z_2& 0& 0& 2z_3& -3z_3\\
0& z_1& 0& 0& 0& 0\\
0& 0& z_1& 0& 0& 0\\
0& 0& 0& z_1& 0& 0\\
0& z_3& 0& 0& z_1& 0\\
0& 0& 0& 0& 0& z_1
\end{pmatrix}
$$
on the basis part $\{\pm \alpha_1, \pm \alpha_6, h_1,h_2\}$ and the form
$$
\begin{pmatrix}
z_1& 0& 0& 0& 0& 0& 0& 0\\
0& z_1& 0& z_3& 0& z_4& 0& z_5\\
-3z_3& 0& z_1& 0& 0& 0& 0& 0\\
0& 0& 0& z_1& 0& -2z_3& 0& -3z_4\\
3z_4& 0& 2z_3& 0& z_1& 0& 0& 0\\
0& 0& 0& 0& 0& z_1& 0& -3z_3\\
-z_5& 0& -z_4& 0& z_3& 0& z_1& 0\\
0& 0& 0& 0& 0& 0& 0& z_1
\end{pmatrix}
$$
on the basis part $\{ \pm \alpha_2, \pm \alpha_3, \pm \alpha_4, \pm \alpha_5\}$.

From the condition
$
h_{\alpha_2}(-1) z h_{\alpha_2}(-1) z=E$ we get $z_1=1$, $z_2=-z_3^2$, $z_4=-z_3^2$.

Now use the last condition:
$$
w_{\alpha_2}(1) z w_{\alpha_2}(1)^{-1} x_{\alpha_4}(1)=w_{\alpha_2}(1)w_{\alpha_1}(1)
y^3 w_{\alpha_1}(1)^{-1} w_{\alpha_2}(1)^{-1} x_{\alpha_4}(1) w_{\alpha_2}(1) z w_{\alpha_2}(1)^{-1},
 $$
 which connects $y$ and $z$. From this condition we obtain $z_3=-y_3$. The assumption for $z_5$ follows from the fact that
  $z$ is an  element of the Chevalley group.

So we have studied also the case $G_2$.

\medskip

Let now  $1/2\in R$, the root system be one of $B_l, C_l, F_4$ (matrices corresponding to these root systems can be found in
the papers \cite{BunBl} and \cite{bunF4}).

Let $y$ be  the image of some long simple root (for example, $y=\rho(x_{\alpha_i}(t))$), $z$ be the image of a short root
 ($z=\rho(x_{\alpha_j}(t))$). We can assume that for the system $B_l$ $\alpha_i=\alpha_1$, $\alpha_j=\alpha_l$, for the system
  $C_l$ $\alpha_i=\alpha_l$, $\alpha_j=\alpha_1$, for the system $F_4$ $\alpha_i=\alpha_1$, $\alpha_j=\alpha_4$.

Note that $y$ and $z$ commute with $h_{\alpha}(-1)$ for some definite~$\alpha\in \Phi$, therefore according to invertible $2$
we directly obtain that the matrices $y$ и $z$ are block-diagonal up to some basis separation.

For a long root~$\alpha$  in any system under consideration  all other pairs of roots $\pm \beta$ are divided to the following cases:

1. $\pm \beta$ are also long roots, orthogonal to $\alpha$.

2. $\pm \beta$ are long roots, generating with $\pm \alpha$ the system $A_2$.

3. $\pm \beta$  are short roots, orthogonal to $\alpha$.

4. $\pm \beta$ a short roots, generating with $\alpha$ the system $B_2$.

In the first case the matrix $y$ commutes with $x_{\beta}(1)$ and $x_{-\beta_1}(1)$, therefore $y$ commutes with
$E_{-\beta,\beta}$, $E_{\beta,-\beta}$. It means that on the basis part $-\beta,\beta$ the matrix $y$ is invariant and scalar, and also
the rest basis part  is also invariant. The same thing is with the third case.

In the second case according to commuting  $y$ with $h_{\alpha}(-1)$ and other $h_{\gamma}(-1)$ for  long roots, distinct
from~$\pm \beta$, the basis part $\pm \beta, \pm (\alpha+\beta)$ is separated to the own diagonal block.

The fourth case means that there are roots $\pm \alpha$, $\pm \beta$, $\pm (\alpha+\beta)$ (short) and
 $\pm (\alpha+2\beta)$ (long). Also the long roots $\pm (\alpha +2\beta)$ are orthogonal to $\alpha$ and, as it was shown above,
 the matrix $y$ is scalar on them. According to commuting with $h_{\alpha}(-1)$ the basis part
  $\pm \alpha, h_1,\dots, h_l$ is separated of the basis part $\pm \beta, \pm (\alpha+\beta)$.
  Now we just need to show that the basis part $\pm \beta, \pm (\alpha+\beta)$ is separated of the basis part
   $\pm \gamma, \pm (\alpha+\gamma)$, where $\gamma$ is also a root of the fourth type.
   If the root system under consideration is $B_l$, then $\alpha$ can be  supposed as the root
    $\alpha_1=e_1-e_2$, then $\beta$ can be only $e_2$. If the   root system is $C_l$, then, for example, $\alpha=2e_1$, then
     $\beta=e_i-e_1$, $\gamma=e_j-e_1$, $i,j> 1$. In this case it is clear that the corresponding parts are separated according to
     the fact that $y$ commutes with $h_{e_i}(-1)$ and $h_{e_j}(-1)$. A similar situation is for the root system~$F_4$.

We see that the whole matrix $y$ is divided into diagonal blocks, where every block is either scalar (and corresponds to some
pair of roots $\pm \beta$), or corresponds to the roots $\pm \beta, \pm (\alpha+\beta)$. Now we can use the results, connected with
the root systems $A_2$ and $B_2$, therefore $y=x_{\alpha_i}(s)$ for some $s\in R$.

Let now $\alpha$ be a short root. Then all other roots are divided into the following cases:

1. $\beta$ is a long root, orthogonal to $\alpha$.

2. $\beta$ is a long root, generating with $\alpha$ the root system $B_2$.

3. $\beta$ is a short root, orthogonal to $\alpha$ and generating with it the system $B_2$ (for example, in the root system
 $B_l$ it holds for $\alpha=e_i$, $\beta=e_j$).

4. $\beta$ is a short root, orthogonal to $\alpha$, $\alpha\pm \beta\notin \Phi$ (for example, for the rot system $C_l$ it holds for
 $\alpha=e_1-e_2$, $\beta=e_3-e_4$).

5. $\beta$ is a short root, generating with $\alpha$ the root system $A_2$ (it holds, for example, in the root system $C_l$ for
 $\alpha=e_1-e_2$, $\beta=e_2-e_3$).

Then all considerations are similar to the long roots.

Therefore the both matrices $y$ and $z$ are block-diagonal, the blocks correspond to separating into root systems $A_2$, $B_2$,
 scalar  $2\times 2$ matrices, and also the basis part $\pm \alpha, h_1,\dots, h_l$ is separated. On the basis part
  $h_3,\dots, h_l$ the matrices are scalar since they commutes with corresponding
   $w_{\alpha_3}(1)$, \dots, $w_{\alpha_l}(1)$. Since we have studied the cases  $A_2$ and $B_2$ above, we see that on the basis parts
    $\pm \alpha, \pm \beta, \pm (\alpha+\beta), h_1, h_2$, and also on the parts
     $\pm \alpha, \pm \gamma, \pm (\alpha+\gamma), \pm (2\alpha  +  \gamma)$ (or $\pm (\alpha+2\gamma)$ depending on the length of~$\alpha$),
     $h_1,h_2$ the matrices coincide with $x_\alpha (s)$ for some $s\in R$. Since on the basis part $\pm \alpha, h_1,h_2$
     we always have the same $s$, then  $s$ is unique for all other basis parts. On the places where the matrix is scalar,
     the multiplier is the same according to commuting with Weil group elements, which map the roots, orthogonal to~$\alpha$,
     into each other, and which preserve~$\alpha$. Clear that either $y$ (in the root system $B_l$, for example), or $z$
     (in the root system $C_l$), and in the root system $F_4$ both $y$ and $z$ can be embedded into the root system $A_2$,
     where we have the following condition (here we write it for~$y$)
$$
y\cdot x_{\gamma}(1)= w y w^{-1} x_{\gamma}(1) \cdot y.
$$
If the matrix $y$ on it scalar part has the multiplier $a$, then this condition gives us either $a=1$, or $a=a^2$,
and according to invertibility of~$a$ it also implies $a=1$. Therefore either $y$, or $z$ coincides with $x_\alpha (s)$ for some $s\in R$.

Now we can assume (according to conjugations by the Weil group elements), that $y$ and $z$ are images of $x_\alpha(t)$ and $x_\beta(t)$,
where $\alpha$ and $\beta$ generate the root system $B_2$. Clear that after using the corresponding commutator conditions we obtain that
all indefinite scalars are~$1$. Therefore, $y=x_{\alpha}(s_1)$, $z=x_{\beta}(s_2)$. Also it is clear (from the above case
 $B_2$), that $s_1=s_2$.

Consequently, the lemma is proved for the root systems $A_2, B_l, C_l, F_4, G_2$.

\medskip

Now we need to prove the lemma for the root systems $A_l, D_l, E_l$, $l\geqslant 3$, but without the condition $1/2\in R$.

At first we are going to show that the lemma holds for the system $A_3$.

In the root system $A_3$ there are roots $\pm \alpha_1, \pm \alpha_2, \pm \alpha_3,
\pm \alpha_4=\pm (\alpha_1+\alpha_2), \pm \alpha_5=\pm (\alpha_2+\alpha_3), \pm \alpha_6=\pm (\alpha_1+\alpha_2+\alpha_3)$
(detailed matrices for this system can be found in the paper~\cite{without2}).

Let $y=\rho(x_{\alpha_1}(t))$.

Note that there is the condition
 $(x_\alpha(1)x_\beta(1)-x_\alpha(1)-x_\beta(1)+E)^2=E_{\alpha+\beta,-\alpha-\beta}$, if $\alpha+\beta\in \Phi$.

Since $y$ commutes with $x_{\alpha_1}(1)$, $x_{\alpha_1+\alpha_2}(1)$, $x_{\alpha_3}(1)$, $x_{-\alpha_2}(1)$,
$x_{-\alpha_3}(1)$, then $y$ has to commute also with $E_{\alpha_6,-\alpha_6}$, $E_{-\alpha_5,\alpha_5}$, and with
 $E_{\alpha_2,-\alpha_2}$. Besides, $y$ commutes with $w_{\alpha_3}(1)$, therefore we obtain that the lines of~$y$,
 corresponded to the basis vectors $\alpha_6, -\alpha_5, \alpha_2$, and its rows corresponded to the vectors
  $-\alpha_6, \alpha_5, -\alpha_2$, have nonzero elements only on the diagonal.

Then we can directly use commuting with matrices written above, commutator condition and the fact that $y$ belongs to
the corresponding Chevalley group and obtain that $y=x_{\alpha_1}(s)$, what was required.

 \medskip

Suppose, finally, that we deal with an arbitrary root system under consideration, still we set $y=\rho(x_{\alpha_1}(t))$.

All basis elements are divided up to  $\alpha_1$ to the following cases:

1. $\pm \alpha_1$ themselves, and also $h_1, h_2$.

2. $h_3,\dots, h_l$.

3. $\pm \beta$, where a root  $\beta$ is orthogonal to~$\alpha_1$, and also there exists one more root $\gamma$, orthogonal to
 $\alpha_1$ and not orthogonal to $\pm \beta$ (it  always holds for the root system $A_l$, $l>3$, but, for example, for the root system
  $D_l$ it does not hold for $\alpha_1=e_1-e_2$, $\beta=e_1+e_2$).

4. $\pm \beta$, where the root $\beta$ is orthogonal to~$\alpha_1$  and does not satisfy the assertion~3.

5. $\pm \beta$, where  $\alpha_1$ and $\beta$ generate the root system~$A_2$.

To use the result, obtained for the system~$A_3$, we just need to prove that the matrix
 $y$ is block-diagonal, where every block has one of listed above types or the block
 of the root system~$A_3$.

At first we consider the easy case --- type 3. For this case we take a root $\gamma$, orthogonal to~$\alpha$ and such that
 $\beta+\gamma\in \Phi$ (clear that $\beta+\gamma$ is also orthogonal to~$\alpha_1$). In the same manner as it was
 described in consideration of the case $A_3$, we obtain that  $y$ commutes with $E_{-\beta,\beta}$ and $E_{\beta,-\beta}$,
 therefore we directly have that the block $\pm \beta$ is separated in the matrix $y$, on this block $y$ is scalar.
 Using the commutator relation we obtain that $y$ on this block is identical. Consequently on the basis parts of the third type
  $y$ completely coincides with $x_{\alpha_1}(s)$.

Besides, $(x_{\alpha_i}(1)-E+E_{\alpha_i,-\alpha_i})E_{-\alpha_i,-\alpha_i}=E_{h_i,-\alpha_i}$ commutes with $y$ for $i\geqslant 3$.
So we directly obtain that the whole row of $y$ with number $h_i$ is zero, except a diagonal element.
From another side,
$E_{\alpha_i,\alpha_i}(x_{\alpha_i}(1)-E+E_{\alpha_i,-\alpha_i})=E_{\alpha_i,h_{i-1}}-2E_{\alpha_i,h_i}+E_{\alpha_i,h_{i+1}}$
also commutes with  $y$, therefore we have that between $h_{i-1}$-th, $h_i$-th and $h_{i+1}$-th lines of $y$ there exists a natural
dependence,
i.\,e., if we show that the $h_2$-th line is zero, then other lines become zero  (except diagonal elements which will be equal).

According to this fact and the considered case $A_3$ we just need to study roots of forth and fifth types.

At the beginning we heed to show that if  roots $\beta$ and $\gamma$ have one of these types and are orthogonal to each other,
then on the place $\beta,\gamma$ in the matrix~$y$ there is zero.

Let the both roots  $\beta$ and $\gamma$ have the forth type. Clear that in this case   they  together with $\alpha_1$
are embedded in the rot system $D_4$, i.\,e., for our convenience we can assume that
 $\alpha_1=e_1-e_2$, $\beta=e_3-e_4$, $\gamma=e_3+e_4$. The matrix $y$ commutes with
  $x_{e_1\pm e_3}(1)$, $x_{e_1 \pm e_4}(1)$, $x_{-e_2\pm e_3}(1)$, $x_{-e_2\pm e_4}(1)$, $x_{\pm e_3\pm e_4}(1)$,
  therefore, as before, $y$ commutes with
   $E_{e_1\pm e_3, -e_1\mp e_3}$, $E_{e_1\pm e_4, -e_1\mp e_4}$, $E_{-e_2\pm e_3, e_2\mp e_3}$, $E_{-e_2\pm e_4, e_2\mp e_4}$.

The matrix $_{e_1-e_3}(1)-E$ at the line corresponding to the root $e_3-e_4$, has only one nonzero element $E_{e_3-e_4,e_1-e_4}$, so
$E_{e_3-e_4, -e_1+e_4}=(x_{e_1-e_3}(1)-E)E_{e_1-e_4,-e_1+e_4}$ commutes with~$y$.

If we multiply $E_{e_3-e_4,-e_1+e_4}$ to $x_{e_1+e_3}(1)-E$, we get $E_{e_3-e_4,e_3+e_4}$, the matrix $y$ also commutes with it.
It is sufficient to have zero on the place $e_3-e_4, e_3+e_4$ in the matrix $y$, what was obtained.

Note that for the root systems $A_l$ and $E_l$ roots of the fourth type do not exist. Let us consider the root system $D_l$,
the root $\beta$ has the fourth type, the root $\gamma$ has the fifth type, the roots  are orthogonal to each other.
It can not be in the system $D_4$, so we can assume that $\alpha_1=e_1-e_2$, $\beta=e_1+e_2$. But in this case every root
which is orthogonal to $\beta$, is also orthogonal to~$\alpha_1$. The situation is impossible.

Now let both roots $\beta$ and $\gamma$ have the fifth type up to~$\alpha_1$, and are orthogonal to each other. Clear that
they can be embedded into the system $A_3$, i.\,e., we can assume that $\alpha_1=e_1-e_2$, $\beta=e_3-e_1$, $\gamma=e_2-e_4$.
In this case zeros on the corresponding places evidently follow from the consideration  of the system~$A_3$.

We have completely studied the case with orthogonal roots $\beta$ and $\gamma$.

Now suppose that roots $\beta$ and $\gamma$ are not orthogonal to each other, i.\,e., generate the system $A_2$.
Clear that in this case they cannot both be of the fourth type. Let the root $\beta$ be of the fourth type, the root $\gamma$
be of the fifth type.
It means that the roots $\alpha_1$, $\beta$, $\gamma$ together generate the system $A_3$, we can assume that
 $\alpha_1=e_1-e_2$, $\gamma=e_2-e_3$, $\beta=e_3-e_4$. This case was already considered, according to commuting with the
 corresponding
$x_\alpha(1)$ we have already proved that there is zero on the place $\beta, \gamma$ in the matrix~$y$.

Finally, let $\beta$ and $\gamma$ both belong to the fifth type. Then we can assume that
 $\alpha_1=e_1-e_2$, $\beta=e_2-e_3$, $\gamma=e_4-e_1$.
 It is clear again that this case is considered for the root system $A_3$.

Therefore we have shown that the matrix $y$ is divided into diagonal blocks so that we can apply the results for $A_3$ to every block.

Consequently, $y=x_{\alpha_1}(s)$, what was required.

\medskip

Consequently for all cases  under consideration  we obtain that $\rho(x_\alpha(t))=x_\alpha(s)$, the mapping
 $t\mapsto s$ does not depend of choice of~$\alpha\in \Phi$. Denote this mapping also by~$\rho: R\to R$,
 we only need to prove that it is an automorphism of~$R$.

Actually, it is one-to-one because the initial automorphism $\rho\in \Aut (E_{\ad}(\Phi, R))$ is bijective.

Its additivity follows from the formula
\begin{multline*}
x_{\alpha}(\rho(t_1)+\rho(t_2))=x_\alpha(\rho(t_1))x_\alpha(\rho(t_2))=\rho(x_\alpha(t_1))\cdot \rho   (x_\alpha(t_2))=\\
=\rho(x_\alpha(t_1)x_\alpha(t_2))=\rho(x_\alpha(t_1+t_2)=x_\alpha (\rho(t_1+t_2)),
\end{multline*}
and  its multiplicativity follows from
\begin{multline*}
x_{\alpha_1+\alpha_2}(\rho(t_1)\rho(t_2))=[x_{\alpha_1}(\rho(t_1)), x_{\alpha_2}(\rho(t_2))]=\rho([x_{\alpha_1}(t_1),
x_{\alpha_2}(t_2)])= \\
=\rho(x_{\alpha_1+\alpha_2}(t_1t_2))=x_{\alpha_1+\alpha_2}(\rho(t_1t_2))
\end{multline*}
for roots $\alpha_1$ and $\alpha_2$, forming the system $A_2$. We cannot find such  a pair of roots only in the system~$B_2$, where we can
take the short roots $\alpha_1=e_1$, $\alpha_2=e_2$, the formula will be
$$
[x_{\alpha_1}(t_1),x_{\alpha_2}(t_2)]=x_{\alpha_1+\alpha_2}(2t_1t_2),
$$
therefore
$$
2\rho (t_1t_2)=2\rho(t_1)\rho(t_2),
$$
it is sufficient because for the root systems $B_l$ we suppose $2$ to be invertible.

Lemma is proved.
\end{proof}

Now we have that the initial automorphism $\varphi$ of the elementary Chevalley group $E_{\ad} (\Phi, R)$ is the composition
of the conjugation $\psi$ with some matrix $A\in \GL_N (S)$ and a ring automorphism~$\rho$, also we know that the conjugation
 $\psi$ is an automorphism of the Lie algebra ${\mathcal L}(\Phi, R)$.

Now  we use the description of such automorphisms from the paper~\cite{Klyachko}  (see Theorem~1 of this paper):

\begin{lemma}\label{Klyachko}
Let $R$ be a commutative associative ring, $\Phi$ be an undecomposable root system. Then every automorphism $\psi$ of the Lie algebra
${\mathcal L}(\Phi, R)$ is the composition of graph and inner automorphisms (by inner automorphism we mean a conjugation with
some $g\in G_{\ad}(\Phi,R)$).
\end{lemma}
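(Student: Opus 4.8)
The plan is to exploit the embedding $R\hookrightarrow S=\prod_{\mathfrak m}R_{\mathfrak m}$ of Proposition~\ref{inlocal} to reduce to the local case. Since $\mathcal L(\Phi,R)$ is free over $R$ on the Chevalley basis $\{h_i,X_\alpha\}$, any $R$-linear automorphism $\psi$ base-changes to an $R_{\mathfrak m}$-linear automorphism $\psi_{\mathfrak m}$ of $\mathcal L(\Phi,R)\otimes_R R_{\mathfrak m}=\mathcal L(\Phi,R_{\mathfrak m})$ for every maximal ideal $\mathfrak m$. It then suffices to prove the assertion over each local ring $R_{\mathfrak m}$ and afterwards to check that the local graph data glue to one graph automorphism and the local inner parts glue to a single element of $G_{\ad}(\Phi,R)$.

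So assume $R$ is local with maximal ideal $\mathfrak m=\Rad R$ and residue field $k=R/\mathfrak m$. First I would reduce modulo $\mathfrak m$: the induced automorphism $\bar\psi$ of $\mathcal L(\Phi,k)$ is, by the classification of automorphisms of Chevalley Lie algebras over a field (the Lie-algebra analogue of Theorem~\ref{isom_fields}, valid for indecomposable $\Phi$ of rank $>1$), a composition of a graph automorphism $\delta$ and an inner automorphism $\ad(\bar g)$ with $\bar g\in G_{\ad}(\Phi,k)$; there is no separate diagonal part, because scaling the root lines is already realized by $\ad(h(\chi))$ with $h(\chi)\in T_{\ad}(\Phi,k)\subset G_{\ad}(\Phi,k)$. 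The graph automorphism $\delta$ lifts verbatim to $\mathcal L(\Phi,R)$, and $\bar g$ lifts to some $g\in G_{\ad}(\Phi,R)$ since $G_{\ad}(\Phi,R)\twoheadrightarrow G_{\ad}(\Phi,k)$ over a local ring. Replacing $\psi$ by $\ad(g)^{-1}\circ\delta^{-1}\circ\psi$, I am reduced to the case
\[
\psi\equiv\mathrm{id}\pmod{\mathfrak m},\qquad\text{i.e.}\qquad \psi(x)-x\in\mathfrak m\,\mathcal L(\Phi,R)\ \ \forall x.
\]

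For such a near-identity $\psi$ the decisive step is to straighten the Cartan subalgebra. Since $\psi$ is congruent to the identity, $\psi(\mathcal H)$ is a split Cartan subalgebra infinitesimally close to $\mathcal H$, and a Hensel-type successive-approximation argument over $R$ produces $g_1\in G_{\ad}(\Phi,R)$ with $g_1\equiv 1\pmod{\mathfrak m}$ and $\ad(g_1)\psi(\mathcal H)=\mathcal H$. Then $\psi':=\ad(g_1)\psi$ fixes $\mathcal H$, hence respects the weight decomposition $\mathcal L=\mathcal H\oplus\bigoplus_\alpha RX_\alpha$, and, being $\equiv\mathrm{id}$, fixes $\mathcal H$ pointwise (the roots span $\mathcal H^*$) and acts on each line $RX_\alpha$ by a unit $c_\alpha\in 1+\mathfrak m$. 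The structure relations force
\[
c_{\alpha+\beta}=c_\alpha c_\beta\quad(\alpha+\beta\in\Phi),\qquad \psi'|_{\mathcal H}=\mathrm{id},
\]
so $\alpha\mapsto c_\alpha$ extends to a character $\chi\in\Hom(\Lambda_{\ad},R^*)$ and $\psi'=\ad(h(\chi))$ is inner by~\eqref{ee4}. Thus $\psi$ is inner over each $R_{\mathfrak m}$. (The same conclusion can be reached more uniformly, and without invertibility hypotheses, by working with the canonical set of long-root --- equivalently extremal --- elements of $\mathcal L(\Phi,R)$, which $\psi$ preserves and whose collinearity geometry is rigid for rank $>1$.)

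Finally I would glue. The graph automorphism $\delta=\delta_i$ attached to $\mathfrak m$ is locally constant on $\mathrm{Spec}\,R$, so the maximal ideals fall into clopen families producing orthogonal idempotents $\varepsilon_1,\dots,\varepsilon_k\in R$ that assemble the $\delta_i$ into one graph automorphism $\Lambda_{\varepsilon_1,\dots,\varepsilon_k}$; the local conjugators, canonically determined once $\mathcal H$ is fixed, have entries in $R$ and combine into a single $g\in G_{\ad}(\Phi,R)$. Hence $\psi=\Lambda_{\varepsilon_1,\dots,\varepsilon_k}\circ\ad(g)$, as required. I expect the main obstacle to be the local near-identity case $\psi\equiv\mathrm{id}\pmod{\mathfrak m}$: conjugating the nearby Cartan subalgebra over a ring (rather than a field) and realizing the unit system $(c_\alpha)$ by an adjoint torus element both need care, and the residue fields of small characteristic (where $2$ or $3$ need not be invertible) are exactly where the field-level classification is delicate, which is why a uniform, characteristic-free invariant is ultimately preferable.
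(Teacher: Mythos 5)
Your route is genuinely different from the paper's, and unfortunately it has gaps precisely where the paper's argument is engineered to avoid them. The paper reproduces Klyachko's scheme-theoretic proof: the ideal $J\subset\mathbb Z[x_{1,1},\dots,x_{N,N}]$ cutting out $\Aut_{\mathbb C}{\mathcal L}(\Phi)$ factors as $J_1\cdots J_d$ along the connected components $h_iG_{\ad}(\Phi)$; evaluating each $J_i$ at the matrix of $\psi$ gives ideals $I_i\triangleleft R$ with $\prod I_i=0$ and $I_i+I_j=R$ (because the components stay pairwise disjoint over every residue field), whence $R\cong\oplus\, R/I_i$ and on each factor $\psi$ lies in $h_i G_{\ad}(\Phi,R/I_i)$. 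Note that this argument never classifies automorphisms of the Lie algebra over any field. Your proof, by contrast, stands on exactly such a classification: the ``Lie-algebra analogue of Theorem~\ref{isom_fields}'' you invoke over each residue field does not exist in the paper (Theorem~\ref{isom_fields} is about Chevalley \emph{groups}), and it is precisely in characteristics $2$ and $3$ --- which you cannot exclude, since Lemma~\ref{Klyachko} carries no invertibility hypotheses and is applied to types $A_l,D_l,E_l$ over rings without $1/2$ --- that Chevalley Lie algebras acquire centers and exceptional ideals (e.g.\ ${\mathcal L}(A_l,k)$ for $\charr k\mid l+1$, $D_l$ and $B_l,C_l,F_4$ in characteristic $2$, $G_2$ in characteristic $3$), so that the clean statement ``every automorphism is graph composed with inner'' is at best unproved there. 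Your own closing sentence concedes this; but a proof cannot rest on a step you yourself flag as unavailable.

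Two further steps would fail as written. First, the ``Hensel-type successive-approximation'' straightening of $\psi(\mathcal H)$ to $\mathcal H$ is not legitimate over $R_{\mathfrak m}$: a localization of an arbitrary commutative ring is neither complete nor Henselian, so iterative corrections have no limit to converge to, and a smooth (even \'etale) scheme with a point over the residue field need not have an $R_{\mathfrak m}$-point --- already $x^2=-1$ over $\mathbb Z_{(5)}$ has a solution modulo $5$ but none in $\mathbb Z_{(5)}$. Conjugacy of nearby split Cartan subalgebras over a non-Henselian local ring therefore needs an honest algebraic argument, not an approximation scheme; this difficulty is exactly why the author's local-ring papers cited in the text are long and technical. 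Second, the gluing: your assertion that the graph type is ``locally constant on $\mathrm{Spec}\,R$'' and hence produces idempotents \emph{lying in $R$} is the heart of the matter, not a remark --- it is precisely what the paper's conditions (i), (ii) and the resulting decomposition $R\cong\oplus\, R/I_i$ prove. (Granting those idempotents, your inner factor is indeed defined over $R$, since for the adjoint group the conjugator is literally the matrix of $\psi$ times an integral graph matrix, and $G_{\ad}(\Phi,S)\cap\GL_N(R)=G_{\ad}(\Phi,R)$; but without them all you have is an element of $G_{\ad}(\Phi,S)$, $S=\prod_{\mathfrak m}R_{\mathfrak m}$.)
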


\begin{proof}
Consider the ideal  $J$ in $\mathbb Z [x_{1,1}, x_{1,2}, \dots , x_{N,N}]$, defining the group
 $\Aut_{\mathbb C} {\mathcal L}(\Phi)$. Over complex numbers this group is well-known: the ideal $J$ decomposes into a product
  $J = J_1J_2 \dots J_d$ of prime ideals~$J_i$,
 corresponding to irreducible
($=$ connected) components $ h_iG(\Phi)$ of the group $\Aut_{\mathbb C} {\mathcal L}(\Phi)$, where $h_i$
 are integer matrices of diagram automorphisms.

 Take a matrix $A = (a_{p,q}) \in \Aut_R{\mathcal L}(\Phi,R)$. Then  $f(a_{p,q}) = 0$ for $f \in J$.
 Put $I_i = \{ f(a_{p,q}) \mid f \in  J_i\} \triangleleft R$.
Then

(i) $\prod I_i=0\{ 0\}$;

(ii) $I_i + I_j = R$ for $i\ne j$ (otherwise we take the factor ring by a maximal ideal $M \supset I_i + I_j$ and obtain a matrix
 $A_M$, belonging to the intersection of two irreducible components of the group $\Aut_{R/M}{\mathcal L}(\Phi,R/M)$,
 but this intersection
is empty, because  $R/M$ is a field).

These  conditions (i) and (ii) imply that the ring $R$ is the direct sum $R = \oplus R/I_i$ (see~\cite{Bou61}, Chapter~2 \S\,1, Proposition~5).

So, $A =\sum A_{I_i}$, and the entries of the matrix $A_{I_i}\in M_N(R/I_i)$ satisfy the equations $f(a_{p,q}) = 0$ for $f \in I_i$.
Therefore,
$A_{I_i} = h_ig_i \in  h_i G_{\ad}(\Phi,R/I_i)$, and $A = (\sum e_ih_i)(\sum g_i)$, where $e_i$ is the unity of the ring $R/I_i$.

Lemma is proved.
\end{proof}

Consequently, the step 2 is completely proved for adjoint elementary Chevalley groups, i.\,e., every automorphism of such
a group is the composition of inner, graph and ring automorphisms.

The second step is complete, i.e., Theorem~1 is proved.


\begin{thebibliography}{99}

\bibitem{Abe_OSN} Abe E. Automorphisms of Chevalley groups over commutative
rings. Algebra and Analysis, 5(2), 1993, 74--90.

\bibitem{Abe1}  Abe E. Chevalley groups over local rings. Tohoku Math. J., 1969, 21(3), 474--494.

\bibitem{v16} Abe E. Chevalley groups over commutative rings. Proc. Conf. Radical Theory, Sendai --- 1988, 1--23.

\bibitem{v17} Abe E. Normal subgroups of Chevalley groups over commutative rings. Contemp. Math., 1989, 83, 1--17.

\bibitem{v18} Abe E., Hurley J. Centers of Chevalley groups over commutative rings. Comm. Algebra, 1988, 16(1), 57--74.

\bibitem{v19} Abe E., Suzuki K. On normal subgroups of Chevalley groups over commutative rings. Tohoku Math. J.,
1976, 28(1), 185-198.

\bibitem{Atya} Atiyah M., Macdonald I. Introduction to Commutative Algebra. Addison--Wesley Publishing Company, 1969.


\bibitem{v20} Bak A. Nonabelian K-theory: The nilpotent class of $K_1$ and general stability. K-Theory, 1991, 4,
    363--397.

\bibitem{v21} Bak A., Vavilov Normality of the elementary subgroup functors. Math. Proc. Cambridge Philos. Soc., 1995,
    118(1), 35--47.

 \bibitem{v2} Bloshytsyn V.Ya. Automorphsisms of general linear group over a commutative ring, not generated by
zero divisors. Algebra and Logic, 1978, 17(6), 639--642.


  \bibitem{v3} Borel A. Properties and linear representations of Chevalley groups. Seminar in algebraic groups, M., 1973, 9--59.


\bibitem{v22} Borel A., Tits J. Homomorphismes ``abstraits'' de groupes alg\'ebriques simples. Ann. Math., 1973, 73,
    499--571.

\bibitem{Burbaki} Bourbaki N. Groupes et Alg\'ebres de Lie. Hermann, 1968.

\bibitem{Bou61} Bourbaki N. Elements de Mathematique. Algebre commutative. Chapitres 1 et 2. Paris, Hermann. 1961.


\bibitem{ravnyekorni} Bunina E.I. Automorphisms of elementary adjoint Chevalley groups of types $A_l$, $D_l$, $E_l$ over local rings. Algebra and Logic, 2009, 48(4), 250--267 (arXiv:math/0702046).

\bibitem{korni2} Bunina E.I.  Automorphisms of adjoint Chevalley groups of types $B_2$ and $G_2$  over local rings. Journal of Mathematical Science,  2008, 155(6), 795-814.

\bibitem{normalizers} Bunina E.I. Automorphisms and normalizers of Chevalley groups of types $A_l$, $D_l$, $E_l$ over local rings with~$1/2$. Fundamentalnaya i prikladnaya matematika, 2009, 15(2), 35--59 (arXiv:0907.5595).


\bibitem{BunBl} Bunina E.I. Automorphisms of Chevalley groups of types $B_l$ over local rings with $1/2$. Fundamentalnaya i prikladnaya matematika,  2009, 15(7), 3--46 (arXiv:0911.4243).

\bibitem{bunF4} Bunina E.I. Automorphisms of Chevalley groups of type $F_4$ over local rings with $1/2$. Journal of Algebra, to appear (arXiv:0907.5592).

\bibitem{without2}    Bunina E.I. Automorphims of Chevalley groups of types $A_l, D_l, E_l$ over local rings without $1/2$. Fundamentalnaya i prikladanaya matematika,  2009, 15(7), 47--80.

\bibitem{v23} Carter R.W. Simple groups of Lie type, 2nd ed., Wiley, London et al., 1989.

\bibitem{v24} Carter R.W., Chen Yu. Automorphisms of affine Kac--Moody groups and related Chevalley groups over rings. J.
    Algebra, 1993, 155, 44--94.

\bibitem{v25} Chen Yu. Isomorphic Chevalley groups over integral domains. Rend. Sem.  Mat. univ. Padova, 1994, 92,
    231--237.

\bibitem{v26} Chen Yu. On representations of elementary subgroups of Chevalley groups over algebras. proc. Amer. Math.
    Soc., 1995, 123(8), 2357--2361.

\bibitem{v27} Chen Yu. Automorphisms of simple Chevalley groups over $\mathbb Q$-algebras. Tohoku Math. J., 1995, 348,
    81--97.

\bibitem{v28} Chen Yu. Isomorphisms of adjoint Chevalley groups over integral domains. Trans. Amer. Math. Soc., 1996,
    348(2), 1--19.

\bibitem{v29} Chen Yu. Isomorphisms of Chevalley groups over algebras. J. Algebra, 2000, 226, 719--741.

\bibitem{Chevalley} Chevalley C. Certain schemas des groupes semi-simples. Sem. Bourbaki,  1960--1961, 219, 1--16.

\bibitem{v14} Chevalley C. Sur certains groupes simples. Tohoku Math. J., 1955, 2(7), 14--66.

\bibitem{Cn} Cohn P., On the structure of the $\GL_2$ of a ring, Publ. Math. Inst. Hautes Et. Sci., 1966, 30, 365--413.


\bibitem{v30} Demazure M., Gabriel P. Groupes alg\'ebriques. I. North Holland, Amsterdam et al., 1970, 1--770.

\bibitem{v31} Demazure M., Grothendieck A. Sch\'emas en groupes. I, II, III, Lecture Notes Math., 1971, 151, 1-564; 152,
    1--654; 153, 1--529.

\bibitem{D} Diedonne J., On the automorphisms of classical groups, Mem. Amer. Math. Soc., 1951, 2.

\bibitem{v9} Diedonne J. Geometry of classical groups,
1974.

\bibitem{Feis} Faith C. Algebra: rings, modules and categories I. Springer--Verlag,  1973.


\bibitem{v32} Golubchik I.Z. Isomorphisms of the linear general group $\GL_n(R)$, $n\ge 4$, over an associative ring.
    Contemp. Math., 1992, 131(1), 123--136.

\bibitem{v8} Golubchik I.Z., Mikhalev A.V. Isomorphisms of unitary groups over associative rings. Zapiski nauchnyh seminarov LOMI, 1983,
132, 97--109 (in Russian).


\bibitem{GolMikh1} Golubchik I.Z., Mikhalev A.V. Isomorphisms of the general linear group over
associative ring. Vestnik MSU, ser. math., 1983, 3, 61--72.

\bibitem{Golub}  Golubchik I.Z. Linear groups over associative rings. Doctoral degree dissertation. Ufa, 1997.


\bibitem{v33} Grothendieck A. El\'ements de g\'eom\'entie alg\'ebrique (r\'edig\'es avec la collaboration de Jean
    Dieudonn\'e). IV. Etude locale des sch\'emas et des morphisms de sch\'emas, 1967, 32, Publ. Math. IHES, 5--361.

\bibitem{v34} Hahn A.J., O'Meara O.T. The classical groups ans K-theory. Springer, Berlin et al., 1989.

\bibitem{v35} Hazrat R., Vavilov N.A. $K_1$ of Chevalley groups are nilpotent. J. Pure Appl. Algebra, 2003, 179,
    99--116.

\bibitem{HR} Hua L.K., Reiner I., Automorphisms of unimodular groups, Trans. Amer. Math. Soc., 71, 1951, 331--348.




\bibitem{H} Humphreys J.\,F., On the automorphisms of infinite Chevalley groups, Canad. J. Math., 21, 1969, 908-911.

\bibitem{Hamfris} Humphreys J.E. Introduction to Lie algebras and representation theory. Springer--Verlag New York,
    1978.

\bibitem{v36} Jantzen J.C. Representations of algebraic groups. Academic Press, N.Y., 1987.

\bibitem{v37} Fuan Li, Zunxian Li. Automorphisms of $\SL_3(R)$, $\GL_3(R)$. Contemp. Math., 1984, 82, 47--52.

\bibitem{Klyachko} Klyachko  Anton A. Automorphisms and isomorphisms
of Chevalley groups and algebras. arXiv:math/0708.2256v3 (2007).


\bibitem{Mc} McDonald B.R., Automorphisms of $\GL_n(R)$., Trans. Amer. Math. Soc., 215, 1976, 145--159.

\bibitem{O'M2} O'Meara O.T., The automorphisms of linear groups over any integral domain, J. reine angew. Math., 223,
    1966, 56--100.

\bibitem{v12} Petechuk V.M. Automorphisms of matrix groups over commutative rings. Mathematical Sbornik, 1983, 45, 527--542.


\bibitem{Petechuk1} Petechuk V.M. Automorphisms of groups $\SL_n$, $\GL_n$ over some local rings. Mathematical Notes,
28(2), 1980, 187--206.

\bibitem{Petechuk2} Petechuk V.M. Automorphisms of groups  $\SL_3(K)$,
$\GL_3(K)$. Mathematical Notes, 31(5), 1982, 657--668.


\bibitem{R1950} Rickart C.E. Isomorphic group of linear transformations. Amer. J. Math, 1950, 72, 451--464.

\bibitem{1928} Schreier O., van der Varden B.L. Die Automorphismen der projektiven Gruppen. Abh. Math. Sem. Univ. Hamburg, 1928, 6, 303--322.



\bibitem{v39} Stein M.R. Generators, relations and coverings of Chevalley groups over commutative  rings. Amer. J. Math.,
    1971, 93(4), 965--1004.

\bibitem{St3} Stein M.R. Surjective stability in dimension $0$ for $K_2$ and related functors, Trans. Amer. Soc., 1973,
    178(1), 165--191.


\bibitem{v40} Stein M.R. Stability theorems for $K_1$, $K_2$ and related functors modeled on Chevalley groups. Japan J.
    Math., 1978, 4(1), 77--108.

\bibitem{Steinberg} Steinberg R. Lectures on Chevalley groups, Yale University, 1967.

\bibitem{Stb1} Steinberg R., Automorphisms of finite linear groups, Canad. J. Math., 121, 1960, 606--615.


\bibitem{v13} Suslin A.A., On a theorem of Cohn, J. Sov. Math. 17 (1981), N2,
1801--1803.


\bibitem{Su} Suzuki K., On the automorphisms of Chevalley groups over $p$-adic integer rings, Kumamoto J. Sci. (Math.),
    16(1), 1984, 39--47.



\bibitem{Sw} Swan R., \emph{Generators and relations for certain special linear groups}, Adv. Math. 6 (1971), 1--77.


\bibitem{v41} Taddei G. Normalit\'e des groupes \'el\'ementaire dans les groupes de Chevalley sur un anneau. Contemp.
    Math., Part II, 1986, 55, 693--710.

\bibitem{v42} Vaserstein L.N. On normal subgroups of Chevalley groups over commutative rings. Tohoku Math. J., 1986,
    36(5), 219--230.

\bibitem{v43} Vavilov N.A. Structure of Chevalley groups over commutative rings. Proc. Conf. Non-associative algebras and
    related topics (Hiroshima -- 1990). World Sci. Publ., London et al., 1991, 219--335.

\bibitem{v44} Vavilov N.A. An $A_3$-proof of structure theorems for Chevalley groups of types $E_6$ and $E_7$. J. Pure
    Appl. Algebra, 2007, 1-16.

\bibitem{VavPlotk1} Vavilov N.A., Plotkin E.B. Chevalley groups over commutative rings. I. Elementary calculations.
Acta Applicandae Math., 1996, 45, 73--115.

\bibitem{v4} Vavilov N.A. Parabolic subgroups of Chevalley groups over commutative ring.
Zapiski nauchnyh seminarov LOMI, 1982, 116, 20--43 (in Russian).

\bibitem{v5} Vavilov N.A., Gavrilovich M.R. $A_2$-proof of structure theorems for Chevalley groups
of types $E_6$ and $E_7$. Algebra and Analisys, 2004, 116(4),
54--87.

\bibitem{v6} Vavilov N.A., Gavrilovich M.R., Nikolenko S.I.
Structure of Chevalley groups: proof from the book. Zapiski nauchnyh
seminarov LOMI, 2006, 330, 36--76 (in Russian).

\bibitem{v7} Vavilov N.A., Petrov V.A. On overgroups of $Ep(2l,R)$.
Algebra and Analisys, 2003, 15(3), 72--114.


\bibitem{v46} Waterhouse W.C. Introduction to affine group schemes. Springer-Verlag, N.Y. et al., 1979.

\bibitem{v47} Waterhouse W.C. Automorphisms of $GL_n(R)$. Proc. Amer. Math. Soc., 1980, 79, 347--351.

\bibitem{v48} Waterhouse W.C. Automorphisms of quotients of $\prod GL(n_i)$. Pacif. J. Math., 1982, 79, 221--233.

\bibitem{v49} Waterhouse W.C. Automorphisms of $det(X_{ij})$: the group scheme approach. Adv. Math., 1987, 65(2),
    171--203.

\bibitem{Wilson} Wilson J.S. The normal and subnormal structure of general linear groups. Proceedings of the Cambridge Philosophical Society, 1972, 71, 163--177.

 \bibitem{v10} Zalesskiy A.E. Linear groups. Itogi Nauki.
 M., 1989, 114--228 (in Russian).

\bibitem{v11} Zelmanov E.I. Isomorphisms of general linear groups over associative rings.
Siberian Mathematical Journal, 1985, 26(4), 49--67 (in Russian).

    \end{thebibliography}
\end{document}